\newtheorem{Def}{Definition}[section]
\newtheorem{lem}[Def]{Lemma}
\newtheorem{theo}[Def]{Theorem}
\newtheorem{pro}[Def]{Proposition}
\newtheorem{rem}[Def]{Remark}
\newtheorem{assum}{Assumption}
\newtheorem{cor}[Def]{Corollary}
\newcommand{\LL}{\langle}
\newcommand{\RR}{\rangle}
\newcommand{\mcal}{\mathcal}
\newcommand{\DW}{\Delta W}
\newcommand{\mscr}{\mathscr}
\newcommand{\mbb}{\mathbb}
\newcommand{\mbf}{\mathbf}
\newcommand{\ud}{\mathrm d}
\numberwithin{equation}{section}
\begin{document}

\title[LDP]{Convergence analysis for minimum action methods coupled with a finite difference method}

\author{Jialin Hong}
\address{Academy of Mathematics and Systems Science, Chinese Academy of Sciences, Beijing
100190, China; School of Mathematical Sciences, University of Chinese Academy of	Sciences, Beijing 100049, China}
\email{hjl@lsec.cc.ac.cn}

\author{Diancong Jin}
\address{School of Mathematics and Statistics, Huazhong University of Science and Technology, Wuhan 430074, China;
	Hubei Key Laboratory of Engineering Modeling and Scientific Computing, Huazhong University of Science and Technology, Wuhan 430074, China.}
\email{jindc@hust.edu.cn (Corresponding author)}

\author{Derui Sheng}
\address{Department of Applied Mathematics, The Hong Kong Polytechnic University, Hung Hom, Kowloon, Hong Kong.}
\email{dsheng@polyu.edu.hk}

\thanks{This work is funded by the National key R\&D Program of China under Grant No. 2020YFA0713701, National Natural Science Foundation of China (Nos. 11971470, 12031020,  11871068, 12201228 and 12171047), and the Fundamental Research Funds for the Central Universities 3004011142.}

\keywords{
minimum action method, finite difference method, large deviations principle, convergence analysis}

\begin{abstract}
The minimum action method (MAM) is an effective approach to numerically solving  minimums and minimizers of Freidlin--Wentzell (F-W) action functionals, which is used to study 
the most probable transition path and probability of the  occurrence of transitions  for stochastic differential equations (SDEs) with small noise. In this paper, we focus on  MAMs based on a finite difference method, and present the convergence analysis of minimums and minimizers of the discrete F-W action functional. The main result shows that the convergence orders of the minimum of the discrete F-W action functional in the cases of multiplicative noises and additive noises are $1/2$ and $1$, respectively. Our main result also reveals the convergence of the stochastic $\theta$-method for SDEs with small noise in terms of large deviations. 
\end{abstract}

\maketitle

\textit{AMS subject classifications}: 65K10, 60F10, 65N06,  60H35

\section{Introduction}
\label{Sec1}
Dynamical systems are often  perturbed by various  environmental noises. Although   the amplitude of random perturbations may be quite small, it can considerably impact the dynamics of underlying systems. For example, the transition between stable equilibrium points of the deterministic systems could take place when the small noise is introduced, which is impossible for the deterministic case. These transition events are rare but play important roles in many processes, such as nucleation events, chemical reactions,  regime change in climate and so on; see, e.g., \cite{Weinan2002,Kampen81,YaoRen2015}. As a general model  perturbed by small noise, we consider in this paper the following nonlinear stochastic differential equation (SDE) with multiplicative noise:
\begin{align}
	\label{SDE}
	\ud X^{\epsilon}(t)=b(X^\epsilon(t))\ud t+\sqrt{\epsilon}\sigma(X^\epsilon(t))\ud W(t),\quad t>0,
\end{align}
Here, $b:\mbb R^d\to\mbb R^d$ and $\sigma:\mbb R^d\to\mbb R^{d\times d}$ satisfy Assumption \ref{Assum1}. Moreover, $\epsilon>0$ denotes the noise intensity and is sufficiently small, and $\{W(t),t\ge0\}$ is a standard $d$-dimensional Brownian motion defined on a complete filtered probability space $(\Omega,\mscr F,\left\{\mscr F_t\right\}_{t\ge 0},\mbf P)$, with  $\left\{\mscr F_t\right\}_{t\ge0}$ satisfying the usual conditions. 

For \eqref{SDE}, a basic problem is to understand the transitions of $X^\epsilon$ between different states. For this end, one usually resorts to Freidlin--Wentzell (F-W) theory of large deviations. The F-W theory gives a rigorous estimate of the distribution of the trajectory $X^\epsilon(\omega,\cdot)$, which asserts that for any  $\varphi\in\mbf C([0,T];\mbb R^d)$ and $0<\delta\ll 1$,
$$\mbf P(\sup_{t\in[0,T]}|X^\epsilon(t)-\varphi(t)|\le \delta)\asymp \exp\Big(-\frac{1}{\epsilon} S_T(\varphi)\Big),\quad\text{as}~\epsilon\to 0.$$ 
Here, $S_T$ is called the F-W action functional, or large deviations rate function in some texts, which is given by
\begin{align}\label{ST}
	S_T(\varphi)=\frac{1}{2}\int_0^T|\sigma^{-1}(\varphi(t))(\varphi'(t)-b(\varphi(t)))|^2\ud t,\quad\varphi\in\mbf H^1(0,T;\mbb R^d).
\end{align}
Especially,  
the F-W theory shows that for any $x_0,x\in\mbb R^d$,
\begin{align}
	\lim\limits_{\epsilon\to 0}\epsilon\ln\mbf P(X^\epsilon\in\mbf C([0,T];\mbb R^d): X^\epsilon(0)=x_0,~X^\epsilon(T)=x)=-\inf_{\substack{\varphi(0)=x_0,\\\varphi(T)=x}}S_T(\varphi).
\end{align}
This implies that the most probable transition path, connecting $x_0$ and $x$ over the time interval $[0,T]$, is the minimizer of the action functional $S_T$. A central question in the F-W theory is how to compute the minimum and minimizer of $S_T$, i.e., how to address the following minimization problem:
\begin{align*}
	\text{Probem I}:\quad S_T(\varphi*)=\inf_{\substack{\varphi(0)=x_0,\\\varphi(T)=x}}S_T(\varphi).
\end{align*}
In addition, if one is interested in the case where $T$ is relaxed, the following minimization problem occurs:
\begin{align*}
	\text{Probem II}:\quad S_{T^*}(\varphi*)=\inf_{T>0}\inf_{\substack{\varphi(0)=x_0,\\\varphi(T)=x}}S_T(\varphi).
\end{align*}
The minimums and minimizers of Problems I and II measure the difficulty of $X^\epsilon$ transferring from $x_0$ to the vicinity of $x$. And the minimizer $\varphi^*$ of Problem I or Problem II corresponds to the most probable transition path connecting $x_0$ and $x$, which is  called the minimum action path (MAP). The numerical methods, which approximately solve Problem I or Problem II, are generally called the minimum action methods (MAMs).  

The MAM is first proposed in \cite{Weinan2004}, and has derived many variants. Here, we only refer to some of them without being exhaustive. For gradient systems (i.e., $b=-\nabla V$ for some potential $V$ in \eqref{SDE}), whose MAP is parallel to the drift term, the MAM 
includes the string method \cite{Weinan2002String}, the nudged elastic band method \cite{MAM1998}, etc. For nongradient systems, a numerical difficulty in finding the MAP lies in that the transition path spends most of its  time around critical points with slow dynamics.   When the time interval $[0,T]$ is discretized uniformly, most points along the numerical transition path will cluster  around the critical points due to the slow dynamics there. Thus, the MAP, mainly determined by fast dynamics, is only captured by a few grid points (see, e.g., \cite{WangX2018}). Some methods have been developed to overcome the above difficulty, such as the   geometric MAM (gMAM) \cite{gMAM2008}, adaptive MAM (aMAM) \cite{WangX2018,aMAM2008} and the MAM with optimal linear time scaling (tMAM) \cite{WangXL2015}.

From the practical point of view, there have been quite many algorithms based on the MAM, which are well developed to numerically solve Problems I and II. However, the rigorous numerical analyses, especially the convergence analysis for minimums and minimizers of discrete F-W action functionals, are very few. We are only aware of that authors in \cite{ZhaiJ2018} analyze the 
convergence  for a conforming finite element approximation of MAMs for the case of additive noises,  through the theory of $\Gamma$-convergence. In addition, they obtain the convergence rate for Problem II in the case that $\sigma=I_d$  and $b$ is linear. Besides the finite element method, the finite difference method (FDM) is also often used to discrete F-W action functionals when MAMs are applied; see, e.g., \cite{Weinan2004,ZhouX2010,aMAM2008}. But to the best of our knowledge, there is no any convergence analysis
for MAMs based on FDMs,  which motivates us  to  develop the corresponding theoretical analysis.

In this paper, we focus on the MAM, coupled with an FDM, for Problem I.
For $N\in\mbb N^+$, let $\left\{t_0<t_1<\cdots<t_{N-1}<t_N\right\}$ be a uniform partition of $[0,T]$ with $t_n=nh$, $n=0,1,\ldots,N$, where $h=\frac{T}{N}$ is the step-size. Then, we use the following FDM to discrete $S_T$: 
\begin{align}\label{STh}
	S_{T,h}(\psi_1,\psi_2,\ldots,\psi_{N-1})=\frac{h}{2}\sum_{n=0}^{N-1}\Big\vert \sigma^{-1}(\psi_n)\Big[\frac{\psi_{n+1}-\psi_n}{h}-b((1-\theta)\psi_n+\theta\psi_{n+1})\Big]\Big\vert ^2
\end{align}
with the parameter $\theta\in[0,1]$ and the constraints $\psi_0=x_0$ and $\psi_N=x$. Accordingly, we obtain a numerical discretization of Problem I: 
$$\text{Problem III}:~ S_{T,h}(\psi_1^*,\psi_2^*,\ldots,\psi_{N-1}^*)=\inf_{(\psi_1,\psi_2,\ldots,\psi_{N-1})\in\mbb R^{N-1}}S_{T,h}(\psi_1,\psi_2,\ldots,\psi_{N-1}).$$
One difficulty in proving the convergence of the minimum of $S_{T,h}$ is  that the feasible region of Problem III is not a subset of  $\mbf H^1(0,T;\mbb R^d)$.  This is different from \cite{ZhaiJ2018} where the conforming finite element method is used to discrete $S_T$, which means that the feasible region of the discrete version for Problem I is naturally embedded into $\mbf H^1(0,T;\mbb R^d)$. In order to overcome this difficulty, we prove that Problem III is equivalent to $$\text{Problem IV}:\quad\hat{S}_{T,h}(\varphi^*_h)=\inf\limits_{\{\varphi\in\mbf H^1(0,T;\mbb R^d):\varphi(0)=x_0,~\varphi(T)=x\}}\hat{S}_{T,h}(\varphi)$$
with
\begin{align}\label{hatSTh}
	\hat{S}_{T,h}(\varphi):=\frac{1}{2}\int_{0}^{T}\left\vert \sigma^{-1}(\varphi(\hat t))\left(\varphi'(t)-b\left((1-\theta)\varphi(\hat t)+\theta\varphi(\check t)\right)\right)\right\vert ^2\ud t,	
\end{align}
where $\hat t:=\max\left(\left\{t_0,t_1,\ldots,t_N\right\}\cap[0,t]\right),~ \check{t}:=\min\left(\left\{t_0,t_1,\ldots,t_N\right\}\cap[t,T]\right)$ for each $t\in[0,T]$.
The equivalence between Problem III and Problem IV enables us to study the error between minimums of $S_T$ and $\hat{S}_{T,h}$ endowed with same constrained space $\mbf H^1(0,T;\mbb R^d)$.  
Our strategy for the convergence analysis  is through the minimizer sequence of $\{\hat{S}_{T,h}\}_{h>0}$, which
relies on two key estimates: one is the equi-coerciveness of  $\hat{S}_{T,h}$ in Lemma \ref{SThcoer}, i.e., the exponential upper bound  of $\mbf H^1$-elements in terms of   $\hat{S}_{T,h}$; the other is the uniform error estimate between $S_{T}$ and $\hat{S}_{T,h}$ on any given bounded set (see Lemma \ref{localerror}).
Our  main result (Theorem \ref{convergence}) shows that the minimum of $\hat{S}_{T,h}$ converges to that of $S_T$, where the convergence orders 
in the cases of multiplicative noises and additive noises are $1/2$ and $1$, respectively. On basis of the convergence of the minimum of $\hat{S}_{T,h}$, we also establish the convergence of any minimizer sequence of $\{\hat{S}_{T,h}\}_{h>0}$ in Theorem \ref{convergence2}. We would like to mention that compared with the theory of $\Gamma$-convergence, our approach can provide the convergence order of the minimumn of the discrete F-W action functional. 
In addition, we show that the minimums of $S_T$ and $\hat{S}_{T,h}$ are the large deviations rate functions (LDRFs) of $\{X^{\epsilon}(T)\}_{\epsilon>0}$ and $\{X^{\epsilon}_N\}_{\epsilon>0}$, respectively, where $X^{\epsilon}_N\approx X^{\epsilon}(T)$ is the numerical solution  generated by the stochastic $\theta$-method for \eqref{SDE}. Thus, our main result also reveals the convergence of the stochastic $\theta$-method for SDEs with small noise in terms of large deviations.

The rest of this paper is organized as follows. Section \ref{Sec2} gives the existence of minimizers of  $S_T$ and $\hat{S}_{T,h}$. Section \ref{Sec3} presents the convergence analysis of minimums and minimizers of $\hat{S}_{T,h}$. As an application of our main result, we analyze the convergence of the LDRF of stochastic $\theta$-method in Section \ref{Sec4}. 
Finally, Section \ref{Sec6} recalls our main results and proposes some future aspects.

\section{Existence of minimizers of action functionals}\label{Sec2}
In this section, we present the existence of minimizers of both Problems I and IV. We begin with some notations. Throughout this paper, let $a\wedge b$ denote the minimum of $a$ and $b$ for any $a,b\in\mbb R$. Let $\mbb N^+$ be the set of all positive integers. Denote by $\vert \cdot\vert $ the $2$-norm of a vector or matrix, and $\langle\cdot,\cdot\rangle$ denotes the inner product of vectors.
For  $T\in(0,+\infty)$ and  $d\in\mbb N^+$, denote by $\mbf C\left([0,T],\mbb R^d\right)$ the space of all continuous functions $f:[0,T]\to\mbb R^d$, equipped with the supremum norm $\|f\|_0=\sup_{t\in[0,T]}\vert f(t)\vert $. And for given $x\in\mbb R^d$, denote $\mbf C_{x}\left([0,T],\mbb R^d\right):=\left\{f\in\mbf C\left([0,T],\mbb R^d\right): f(0)=x\right\}$.
Let $\mbf L^2(0,T;\mbb R^d)$  stand for the space of all square integrable functions with the inner product $\langle f,g\rangle_{\mbf L^2}=\int_{0}^T\langle f(t),g(t)\rangle\ud t$ and the induced norm $\|f\|_{\mbf L^2}:=\sqrt{\langle f,f\rangle_{\mbf L^2}}$ for any $f,g\in \mbf L^2(0,T;\mbb R^d)$.
Denote $\mbf H^1(0,T;\mbb R^d):=\big\{f:[0,T]\to\mbb R^d:f~\text{is absolutely continuous}~\text{and}~f'\in\mbf L^2(0,T;\mbb R^d)  \big\}$, endowed  with norm $\|f\|_{\mbf H^1}:=\|f\|_{\mbf L^2}+\|f'\|_{\mbf L^2}$. Also
for given $x_0,\,x\in\mbb R^d$, denote $\mbf H^1_{x_0}(0,T;\mbb R^d):=\{f\in \mbf H^1(0,T;\mbb R^d): f(0)=x_0\}$ and $\mbf H^1_{x_0,x}(0,T;\mbb R^d):=\{f\in \mbf H^1(0,T;\mbb R^d): f(0)=x_0,~f(T)=x\}$. In addition, let $\mbf{W}^{m,p}(0,T;\mbb R^d)$ ($m\in\mbb N^+$, $1\le p< \infty$), endowed with the norm 
$\|f\|_{\mbf{W}^{m,p}}=(\sum_{k=0}^m\int_0^T\vert f^{(k)}(t)\vert ^p\ud t)^{\frac1p}$,
denote the usual Sobolev space consisting of all $\mbf L^p$-integrable functions whose weak derivatives up to $m$ order are also $\mbf L^p$-integrable.

\subsection{Existence of minimizer of $S_T$}
In this subsection, we show that $S_T$ admits a minimizer by the coerciveness and weakly lower semicontinuity of $S_T$. Throughout this paper, we always let the following conditions hold without extra statements.
\begin{assum}\label{Assum1}
	$b$ and $\sigma$ are globally Lipschitz continuous, i.e.,  there is some constant $L>0$ such that
	\begin{align}\label{lip}
		\vert b(x)-b(y)\vert +\vert \sigma(x)-\sigma(y)\vert \leq L\vert x-y\vert \quad \forall~x,y\in\mbb R^d.
	\end{align}
	In addition, $\sigma(x)$ is invertible for each $x\in\mbb R^d$.
\end{assum}
\noindent It follows from \eqref{lip} that both $b$ and $\sigma$ grow at most linearly. For convenience, we also assume that $\vert b(0)\vert +\vert \sigma(0)\vert \leq L$ so that 
\begin{align}\label{linear}
	\vert b(x)\vert +\vert \sigma(x)\vert \leq L(1+\vert x\vert )\quad\forall~ x\in\mbb R^d.
\end{align}

In the later arguments, we will often use the following facts.
\begin{pro}\label{Prop}
	The   following properties hold.
	\begin{itemize}
		\item[(1)]  $\sigma^{-1}$ is locally Lipschitz continuous, i.e., for each $R>0$, there exists some constant $L_R>0$ such that for any $x,y\in\mbb R^d$ with $|x|\le R$ and $|y|\le R$,
		\begin{align*}
			\vert \sigma^{-1}(x)-\sigma^{-1}(y)\vert \leq L_{R}\vert x-y\vert.
		\end{align*}
		\item[(2)] For any $\varphi\in\mbf H^1_{x_0}(0,T;\mbb R^d)$, 
		\begin{align*}
			\vert \varphi(t)-\varphi(s)\vert &\leq(t-s)^{1/2}\left(\int_{s}^{t}\vert \varphi'(r)\vert ^2\ud r\right)^{1/2}\quad \forall~0\leq s\leq t\leq T,\\
			\|\varphi\|_0&\leq \vert x_0\vert +\sqrt{T}\|\varphi\|_{\mbf H^1}.
		\end{align*}
	\end{itemize}
\end{pro}

The coerciveness and weakly lower semicontinuity of $S_T$ are established in Proposition \ref{STcoercive} and Proposition \ref{STlower}, respectively. 
\begin{pro}\label{STcoercive}There exists some constant $C_0>0$ such that for any $\varphi\in\mbf H^1_{x_0,x}(0,T;\mbb R^d)$,
	$\|\varphi\|_{\mbf H^1}\leq C_0e^{C_0S_T(\varphi)}.$
\end{pro}
\begin{proof}
	We use $K(x_0,T,L)$ to denote some positive constant depending on $x_0,T$ and $L$, which may vary for each appearance. 
	Denote $f(t)=\sigma^{-1}(\varphi(t))\left(\varphi'(t)-b(\varphi(t))\right)$  for a.e. $t\in[0,T]$. Then $\|f\|_{\mbf L^2}^2=2S_T(\varphi)$ and 
	$\varphi(t)=x_0+\int_{0}^tb(\varphi(s))\ud s+\int_{0}^t\sigma(\varphi(s))f(s)\ud s$.
	By the H\"older inequality and \eqref{linear}, for each $t\in[0,T]$,
	\begin{align*}
		\vert \varphi(t)\vert ^2&\leq 3\vert x_0\vert ^2+3t\int_0^t\vert b(\varphi(s))\vert ^2\ud s+3\int_{0}^t\vert \sigma(\varphi(s))\vert ^2\ud s\int_0^t\vert f(s)\vert ^2\ud s\\
		&\leq 3\vert x_0\vert ^2+6TL^2\int_0^t\left(1+\vert \varphi(s)\vert ^2\right)\ud s+12L^2S_T(\varphi)\int_0^t\left(1+\vert \varphi(s)\vert ^2\right)\ud s\\
		&\leq K(x_0,T,L)\left(1+S_T(\varphi)\right)+K(x_0,T,L)\left(1+S_T(\varphi)\right)\int_0^t\vert \varphi(s)\vert ^2\ud s.
	\end{align*}
	According to the Gronwall inequality, for any $t\in[0,T]$,
	\begin{align*}
		\vert \varphi(t)\vert ^2\leq K(x_0,T,L)\left(1+S_T(\varphi)\right)e^{K(x_0,T,L)\left(1+S_T(\varphi)\right)t}
		\leq K(x_0,T,L)e^{K(x_0,T,L)S_T(\varphi)},
	\end{align*}
	where we have used the fact $1+x\leq e^x$ for any $x\in\mbb R$. Further, we obtain $\|\varphi\|_0\leq K(x_0,T,L)e^{K(x_0,T,L)S_T(\varphi)}$. Hence, $\|b(\varphi)\|_0+\|\sigma(\varphi)\|_0\leq L(1+\|\varphi\|_0)\leq K(x_0,T,L)e^{K(x_0,T,L)S_T(\varphi)}$. Noting that  $\varphi'=b(\varphi)+\sigma(\varphi)f$ and $\sqrt{2S_T(\varphi)}\leq1+S_T(\varphi)\leq e^{S_T(\varphi)}$, we have
	\begin{align*}
		\|\varphi'\|_{\mbf L^2}
		\leq \sqrt{T}\|b(\varphi)\|_0+\|\sigma(\varphi)\|_0\sqrt{2S_T(\varphi)}
		\leq K(x_0,T,L)e^{K(x_0,T,L)S_T(\varphi)}.
	\end{align*} 
	Thus $\|\varphi\|_{\mbf H^1}=\|\varphi'\|_{\mbf L^2}+\|\varphi\|_{\mbf L^2}\le \|\varphi'\|_{\mbf L^2}+\sqrt{T}\|\varphi\|_{0}\le K(x_0,T,L)e^{K(x_0,T,L)S_T(\varphi)}$, which completes the proof.
\end{proof}

\begin{pro}\label{STlower}
	For any sequence $\{\varphi_n\}_{n\in\mbb N^+}$  converging to some $\varphi$ with respect to (w.r.t.) the weak topology of $\mbf H^1(0,T;\mbb R^d)$, it holds that
	$\liminf\limits_{n\to\infty}S_T(\varphi_n)\ge S_T(\varphi).$
\end{pro}
\begin{proof}
	Assume that $\{\varphi_n\}_{n\in\mbb N^+}$  converges to some $\varphi$ weakly in $\mbf H^1(0,T;\mbb R^d)$, which means that $\lim\limits_{n\to\infty}\varphi_n=\varphi$ and $\lim\limits_{n\to\infty}\varphi_n'=\varphi'$ w.r.t. the weak topology of $\mbf L^2(0,T;\mbb R^d)$. Thus,  $\sup\limits_{n\in\mbb N^+}\|\varphi'_n\|_{\mbf L^2}<+\infty$.
	Since $\mbf H^1(0,T;\mbb R^d)$ is compactly embedded into $\mbf C([0,T];\mbb R^d)$, we have $\lim\limits_{n\to+\infty}\|\varphi_n-\varphi\|_0=0$ and thus $\sup\limits_{n\in\mbb N^+}\|\varphi_n\|_0<+\infty$.	This combined with \eqref{lip} and Proposition \ref{Prop}(1)  yields
	\begin{gather}
		\lim\limits_{n\to+\infty}\|b(\varphi_n)-b(\varphi)\|_0=0,\qquad
		\lim_{n\to\infty}\|\sigma^{-1}(\varphi_n)-\sigma^{-1}(\varphi)\|_0=0,\label{k1}\\
		\lim_{n\to\infty}\|\sigma^{-1}(\varphi_n)b(\varphi_n)-\sigma^{-1}(\varphi)b(\varphi)\|_0=0. \label{k2}
	\end{gather}
	It follows from \eqref{k1} and $\sup\limits_{n\in\mbb N^+}\|\varphi'_n\|_{\mbf L^2}<+\infty$ that  for any $f\in\mbf L^2(0,T;\mbb R^d)$,
	\begin{align*}
		&\;\lim_{n\to\infty}\big(\LL\sigma^{-1}(\varphi_n)\varphi_n',f\RR_{\mbf L^2}-\LL\sigma^{-1}(\varphi)\varphi,f\RR_{\mbf L^2}\big)\\
		=&\;\lim_{n\to\infty}\LL(\sigma^{-1}(\varphi_n)-\sigma^{-1}(\varphi))\varphi_n',f\RR_{\mbf L^2}+\lim_{n\to\infty}\LL\varphi_n'-\varphi',(\sigma^{-1}(\varphi))^\top f\RR_{\mbf L^2}=0,
	\end{align*}
	which implies
	\begin{align}\label{k3}
		\lim_{n\to\infty}\sigma^{-1}(\varphi_n)\varphi'_n=\sigma^{-1}(\varphi)\varphi',\quad~\text{weakly in}~\mbf L^2(0,T;\mbb R^d).
	\end{align}
	Using \eqref{k2} and \eqref{k3} yields
	\begin{align}\label{k4}
		\lim_{n\to\infty}\LL\sigma^{-1}(\varphi_n)\varphi'_n,\sigma^{-1}(\varphi_n)b(\varphi_n)\RR_{\mbf L^2}=\LL\sigma^{-1}(\varphi)\varphi',\sigma^{-1}(\varphi)b(\varphi)\RR_{\mbf L^2}.
	\end{align}
	Finally, combining \eqref{k2}-\eqref{k4} and the weakly lower semicontinuity of the norm, we arrive at
	\begin{align*}
		\liminf_{n\to\infty}S_T(\varphi_n)
		&=\frac{1}{2}\liminf_{n\to\infty}\|\sigma^{-1}(\varphi_n)\varphi_n'\|_{\mbf L^2}^2-\lim_{n\to\infty}\LL\sigma^{-1}(\varphi_n)\varphi'_n,\sigma^{-1}(\varphi_n)b(\varphi_n)\RR_{\mbf L^2}\\
		&\quad~+\frac{1}{2}\lim_{n\to\infty}\|\sigma^{-1}(\varphi_n)b(\varphi_n)\|_{\mbf L^2}^2\\
		&\ge \frac{1}{2}\|\sigma^{-1}(\varphi)\varphi'\|_{\mbf L^2}^2-\LL\sigma^{-1}(\varphi)\varphi',\sigma^{-1}(\varphi)b(\varphi)\RR_{\mbf L^2}+\frac{1}{2}\|\sigma^{-1}(\varphi)b(\varphi)\|_{\mbf L^2}^2\\
		&=S_T(\varphi).
	\end{align*}
	Thus the proof is complete.
\end{proof}

Equipped with Propositions \ref{STcoercive}-\ref{STlower}, we can use the classical variational theory (see e.g., \cite{variation}) to prove the existence of a minimizer of $S_T$.
\begin{lem}\label{STmin}
	There exists a function $\varphi^*\in\mbf H^1_{x_0,x}(0,T;\mbf R^d)$ such that 
	$$S_T(\varphi^*)=\inf_{\varphi\in\mbf H^1_{x_0,x}(0,T;\mbb R^d)}S_T(\varphi).$$
\end{lem}
\begin{proof}
	Denote $A:=\inf\limits_{\varphi\in\mbf H^1_{x_0,x}(0,T;\mbb R^d)}S_T(\varphi)$. It is easy to see that $A\le S_T(\Phi_L)<+\infty$, where $\Phi_L=x_0+\frac{t}{T}(x-x_0)$, $t\in[0,T]$. Then we can take a minimization  sequence $\{\varphi_n\}_{n\in\mbb N^+}\subseteq\mbf H^1_{x_0,x}(0,T;\mbb R^d)$ such that $\lim\limits_{n\to\infty}S_T(\varphi_n)=A$, and thus $\sup\limits_{n\in\mbb N^+}S_T(\varphi_n)\le K(A)$. Applying Proposition \ref{STcoercive}, one has
	$\sup\limits_{n\in\mbb N^+}\|\varphi_n\|_{\mbf H^1}\le K(A,C_0).$
	Consequently, there exists a subsequence $\{\varphi_{n_k}\}_{k\in\mbb N^+}$ of $\{\varphi_{n}\}_{n\in\mbb N^+}$ converging weakly to some $\varphi^*\in\mbf H^1(0,T;\mbb R^d)$, due to the reflexivity of   $\mbf H^1(0,T;\mbb R^d)$.
	Since $\mbf H^1(0,T;\mbb R^d)$ is compactly embedded into $\mbf C([0,T],\mbb R^d)$, $\lim\limits_{k\to\infty}\varphi_{n_k}=\varphi^*$ in $\|\cdot\|_0$-norm. This indicates that $\varphi(0)=x_0$ and $\varphi(T)=x$, and thus $\varphi^*\in\mbf H^1_{x_0,x}(0,T;\mbb R^d)$. Further, combining Proposition \ref{STlower}, we obtain
	$A\le S_T(\varphi^*)\le \liminf\limits_{k\to\infty}S_T(\varphi_{n_k})=A,$
	which finishes the proof.
\end{proof}

\subsection{Existence of minimizers of $S_{T,h}$ and $\hat{S}_{T,h}$}
In this part, we present the existence of minimizers for Problems III and IV. Before that, we  show that Problem III is equivalent to Problem IV.

\begin{lem}\label{equivalent}
	We have 
	\begin{align}\label{infequi}
		&\;\inf_{(\psi_1,\psi_2,\ldots,\psi_{N-1})\in\mbb R^{N-1}}S_{T,h}(\psi_1,\psi_2,\ldots,\psi_{N-1})\nonumber\\
		=&\;\inf_{\varphi\in\mbf H^1_{x_0,x}(0,T;\mbb R^d)}\hat{S}_{T,h}(\varphi)=\inf_{\varphi\in\mcal M_{x_0,x}(0,T;\mbb R^d)}\hat{S}_{T,h}(\varphi),
	\end{align}
	where $\mcal M_{x_0,x}(0,T;\mbb R^d):=\big\{u\in\mbf C([0,T];\mbb R^d):u(t)=u(t_n)+\frac{u(t_{n+1})-u(t_n)}{h}(t-t_n),~t\in[t_n,t_{n+1}],~n=0,1,\ldots,N-1,~\text{and}~u(0)=x_0,~u(T)=x\big\}$.
\end{lem}
\begin{proof}
	The H\"older inequality and \eqref{hatSTh} yield that	for any $\varphi\in\mbf H^1_{x_0,x}(0,T;\mbb R^d)$,
	\begin{align*}
		\hat{S}_{T,h}(\varphi)&=\frac{1}{2}\sum_{n=0}^{N-1}\int_{t_n}^{t_{n+1}}\big|\sigma^{-1}(\varphi(t_n))\big(\varphi'(t)-b((1-\theta)\varphi(t_n)+\theta\varphi(t_{n+1}))\big)\big|^2\ud t\\
		&\ge\frac{1}{2h}\sum_{n=0}^{N-1}\Big|\int_{t_n}^{t_{n+1}}\sigma^{-1}(\varphi(t_n))\big(\varphi'(t)-b((1-\theta)\varphi(t_n)+\theta\varphi(t_{n+1}))\big)\ud t\Big|^2\\
		&=\frac{1}{2h}\sum_{n=0}^{N-1}\Big|\sigma^{-1}(\varphi(t_n))\big(\varphi(t_{n+1})-\varphi(t_n)-hb((1-\theta)\varphi(t_n)+\theta\varphi(t_{n+1}))\big)\Big|^2\\
		&=S_{T,h}(\varphi(t_1),\varphi(t_2),\ldots,\varphi(t_{N-1})).
	\end{align*}
	Thus, 
	$\inf\limits_{(\psi_1,\psi_2,\ldots,\psi_{N-1})\in\mbb R^{N-1}}S_{T,h}(\psi_1,\psi_2,\ldots,\psi_{N-1})\le \inf\limits_{\varphi\in\mbf H^1_{x_0,x}(0,T;\mbb R^d)}\hat{S}_{T,h}(\varphi).$
	On the other hand, for any $(\psi_1,\psi_2,\ldots,\psi_{N-1})\in\mbb R^{N-1}$, define $\bar{\varphi}\in\mbf H^1_{x_0,x}(0,T;\mbb R^d)$ by
	\begin{align*}
		\bar{\varphi}(t)=\psi_{n}+\frac{\psi_{n+1}-\psi_{n}}{h}(t-t_n),\quad~t\in[t_n,t_{n+1}],~n=0,1,\ldots,N-1,
	\end{align*} 
	where $\psi_0=x_0$ and $\psi_N=x$. Further, it follows from \eqref{STh} that 
	\begin{align}\label{k5}
		&\;S_{T,h}(\psi_1,\psi_2,\ldots,\psi_{N-1})\nonumber\\
		=&\;\frac{1}{2}\sum_{n=0}^{N-1}\int_{t_n}^{t_{n+1}}\Big\vert \sigma^{-1}(\psi_n)\Big[\frac{\psi_{n+1}-\psi_n}{h}-b((1-\theta)\psi_n+\theta\psi_{n+1})\Big]\Big\vert ^2\ud t\nonumber\\
		=&\; \frac{1}{2}\sum_{n=0}^{N-1}\int_{t_n}^{t_{n+1}}\Big|\sigma^{-1}(\bar{\varphi}(\hat{t}))\big(\bar{\varphi}'(t)-b((1-\theta)\bar{\varphi}(\hat{t})+\theta\bar{\varphi}(\check{t}))\big)\Big|^2\ud t\nonumber\\
		=&\;\hat{S}_{T,h}(\bar{\varphi}),
	\end{align}
	which leads to
	$\inf\limits_{(\psi_1,\psi_2,\ldots,\psi_{N-1})\in\mbb R^{N-1}}S_{T,h}(\psi_1,\psi_2,\ldots,\psi_{N-1})\ge \inf\limits_{\varphi\in\mbf H^1_{x_0,x}(0,T;\mbb R^d)}\hat{S}_{T,h}(\varphi).$
	Thus, the first equality in \eqref{infequi} holds.
	Note that $\mbb R^{N-1}$ is isomorphic to $\mcal M_{x_0,x}(0,T;\mbb R^d)$, which together with
	\eqref{k5} yields the second equality in \eqref{infequi}. This finishes the proof.
\end{proof}

\begin{rem}\label{rem2.6}
	One can conclude from \eqref{infequi} and \eqref{k5} that there is a one-to-one correspondence  between 
	the set of minimizers of $S_{T,h}$ and that of $\hat S_{T,h}$  through the  relation
	$\varphi^*(t)=\psi^*_{n}+\frac{\psi^*_{n+1}-\psi^*_{n}}{h}(t-t_n),~t\in[t_n,t_{n+1}],~n=0,1,\ldots,N-1,$
	where $\psi^*_0=x_0$ and $\psi^*_N=x$. 
\end{rem}

The following two lemmas give the equi-coerciveness and weakly lower semicontinuity of $\hat{S}_{T,h}$.

\begin{lem}\label{SThcoer}  There exists some constant $C_1>0$ independent of $h$ such that for any $\varphi\in\mbf H^1_{x_0}(0,T;\mbb R^d)$ and $h\in(0,\frac{1}{2L}]$,
	$\|\varphi\|_{\mbf H^1}\leq C_1e^{C_1\hat{S}_{T,h}(\varphi)}.$
\end{lem}
\begin{proof}
	We use $K(x_0,T,L)$ to denote some constant depending on $x_0,T$ and $L$, but independent of the step-size $h$, which may vary from one place to somewhere. 
	
	Denote $g(t)=\sigma^{-1}(\varphi(\hat t))\left(\varphi'(t)-b\left((1-\theta)\varphi(\hat t)+\theta\varphi(\check t)\right)\right)$ for a.e. $t\in[0,T]$. Then $\|g\|_{\mbf L^2}^2=2\hat{S}_{T,h}(\varphi)$ and for any $t\in[0,T]$,
	\begin{align}\label{sec3eq1}
		\varphi(t)=x_0+\int_0^tb\left((1-\theta)\varphi(\hat s)+\theta\varphi(\check s)\right)\ud s+\int_0^t\sigma(\varphi(\hat s))g(s)\ud s.
	\end{align}
	Hence for any $n=1,2,\ldots,N$,
	\begin{align*}
		\varphi(t_n)=\varphi(t_{n-1})+hb\left((1-\theta)\varphi(t_{n-1})+\theta\varphi(t_n)\right)+\sigma(\varphi(t_{n-1}))\int_{t_{n-1}}^{t_n}g(t)\ud t.
	\end{align*}
	It follows from \eqref{linear} that 
	\begin{align*}
		\vert \varphi(t_n)\vert &\leq \vert \varphi(t_{n-1})\vert +hL\left(1+\vert \varphi(t_{n-1})\vert +\vert \varphi(t_{n})\vert \right)+L(1+\vert \varphi(t_{n-1})\vert )\int_{t_{n-1}}^{t_n}\vert g(t)\vert \ud t\\
		&=\left(1+hL+L\int_{t_{n-1}}^{t_n}\vert g(t)\vert \ud t\right)\vert \varphi(t_{n-1})\vert +Lh\vert \varphi(t_{n})\vert +Lh+L\int_{t_{n-1}}^{t_n}\vert g(t)\vert \ud t.
	\end{align*}
	Thus for any $h\in(0,\frac{1}{2L}]$ and $n=1,2,\ldots,N$,
	\begin{align*}
		&\phantom{=}\vert \varphi(t_n)\vert \\
		&\leq \frac{\left(1+hL+L\int_{t_{n-1}}^{t_n}\vert g(t)\vert \ud t\right)\vert \varphi(t_{n-1})\vert }{1-Lh}+\frac{Lh}{1-Lh}+\frac{L}{1-Lh}\int_{t_{n-1}}^{t_n}\vert g(t)\vert \ud t\\
		&=\vert \varphi(t_{n-1})\vert +\frac{2Lh+L\int_{t_{n-1}}^{t_n}\vert g(t)\vert \ud t}{1-Lh}\vert \varphi(t_{n-1})\vert +\frac{Lh}{1-Lh}+\frac{L}{1-Lh}\int_{t_{n-1}}^{t_n}\vert g(t)\vert \ud t\\
		&\leq\vert \varphi(t_{n-1})\vert +\left(4Lh+2L\int_{t_{n-1}}^{t_n}\vert g(t)\vert \ud t\right)\vert \varphi(t_{n-1})\vert +2Lh+2L\int_{t_{n-1}}^{t_n}\vert g(t)\vert \ud t.
	\end{align*}
	Setting $k_n=4Lh+2L\int_{t_{n}}^{t_{n+1}}\vert g(t)\vert \ud t$, $n=0,1,\ldots,N-1$ and by iteration, we  have
	\begin{align*}
		\vert \varphi(t_n)\vert 
		&\leq\vert x_0\vert +\sum_{j=0}^{n-1}k_j\vert \varphi(t_j)\vert +\sum_{j=0}^{n-1}k_j\quad\forall~n=1,2,\ldots,N.
	\end{align*}
	It follows from \cite[Lemma 1.4.2]{AA94} that 
	$\sup\limits_{n=0,1,\ldots,N}\vert \varphi(t_n)\vert \leq \Big(\vert x_0\vert +\sum\limits_{j=0}^{n-1}k_j\Big)e^{\sum_{j=0}^{n-1}k_j}.$
	Since
	\begin{align*}
		\sum_{j=0}^{N-1}k_j&=4LT+2L\int_0^T\vert g(t)\vert \ud t\leq 4LT+L^2+T\|g\|_{\mbf L^2}^2\\
		&=4LT+L^2+2T\hat{S}_{T,h}(\varphi)\leq K(T,L)(1+\hat{S}_{T,h}(\varphi)),
	\end{align*}
	we obtain
	\begin{align*}
		\sup\limits_{n=0,1,\ldots,N}\vert \varphi(t_n)\vert &\leq\left(\vert x_0\vert +K(T,L)(1+\hat{S}_{T,h}(\varphi))\right)e^{K(T,L)(1+\hat{S}_{T,h}(\varphi))}\\
		&\leq K(x_0,T,L)e^{K(x_0,T,L)\hat{S}_{T,h}(\varphi)}.
	\end{align*}
	As a consequence,
	\begin{align*}
		&\phantom{\leq}\sup\limits_{t\in[0,T]}\left\vert b\left((1-\theta)\varphi(\hat t)+\theta\varphi(\check t)\right)\right\vert+\sup\limits_{t\in[0,T]}\left\vert \sigma(\varphi(\hat t))\right\vert\\
		& \leq L\Big(1+\sup\limits_{n=0,1,\ldots,N}\vert \varphi(t_n)\vert \Big)
		\leq K(x_0,T,L)e^{K(x_0,T,L)\hat{S}_{T,h}(\varphi)}.
	\end{align*}
	Combining the above formulas, \eqref{sec3eq1}, the H\"older inequality and the fact $\|g\|_{\mbf L^2}=\sqrt{2\hat{S}_{T,h}(\varphi)}\leq e^{\hat{S}_{T,h}(\varphi)}$, we get
	$\|\varphi\|_{\mbf L^2}\le \sqrt{T}\|\varphi\|_0\le  K(x_0,T,L)e^{K(x_0,T,L)\hat{S}_{T,h}(\varphi)}.$
	In addition,	by \eqref{sec3eq1}, $\varphi'(t)=b\left((1-\theta)\varphi(\hat t)+\theta\varphi(\check t)\right)+\sigma(\varphi(\hat t))g(t)$ for a.e. $t\in[0,T]$. Combining the previous estimates gives
	\begin{align*}
		\|\varphi'\|_{\mbf L^2}&\leq K(x_0,T,L)e^{K(x_0,T,L)\hat{S}_{T,h}(\varphi)}\sqrt{T}+K(x_0,T,L)e^{K(x_0,T,L)\hat{S}_{T,h}(\varphi)}\|g\|_{\mbf L^2}\\
		&\leq K(x_0,T,L)e^{K(x_0,T,L)\hat{S}_{T,h}(\varphi)}.
	\end{align*}
	Thus, we obtain the desired conclusion.
\end{proof}

\begin{lem}\label{SThlower}
	For any sequence $\{\varphi_n\}_{n\in\mbb N^+}$  converging weakly to some $\varphi$ in $\mbf H^1(0,T;\mbb R^d)$, it holds that
	$\liminf\limits_{n\to\infty}\hat{S}_{T,h}(\varphi_n)\ge \hat{S}_{T,h}(\varphi).$
\end{lem}
\begin{proof}
	Assume that $\{\varphi_n\}_{n\in\mbb N^+}$  converges to some $\varphi$ weakly in $\mbf H^1(0,T;\mbb R^d)$. Then $\{\varphi_n\}_{n\in\mbb N^+}$ converges to $\varphi$ in $\|\cdot\|_0$-norm due to the Sobolev compact embedding. As a result,
	$\lim\limits_{n\to+\infty}\sup\limits_{t\in[0,T]}|b((1-\theta)\varphi_n(\hat{t})+\theta\varphi_n(\check{t}))-b((1-\theta)\varphi(\hat{t})+\theta\varphi(\check{t}))|=0$ and
	$\lim\limits_{n\to\infty}\sup\limits_{t\in[0,T]}|\sigma^{-1}(\varphi_n(\hat{t}))-\sigma^{-1}(\varphi(\hat{t}))|=0.$ 
	The remainder of proof resembles that of Proposition \ref{STlower}, and thus is omitted.
\end{proof}

With previous preparations, one can use the same arguments as in the proof of Lemma \ref{STmin} to show that $\hat{S}_{T,h}$ admits at least a minimizer.

\begin{lem}\label{SThmin}
	For any  $h\in(0,\frac{1}{2L}]$, there exists $\varphi^*_h\in\mbf H^1_{x_0,x}(0,T;\mbb R^d)$ such that
	$$\hat{S}_{T,h}(\varphi^*_h)=\inf_{\varphi\in\mbf H^1_{x_0,x}(0,T;\mbb R^d)}\hat{S}_{T,h}(\varphi).$$
\end{lem}

Further, we show that $S_{T,h}$ admits a minimizer.
\begin{lem}\label{SThmin2}
	For any given $h\in(0,\frac{1}{2L}]$, there exists $(\psi_1^*,\psi_2^*,\ldots,\psi_{N-1}^*)\in\mbb R^{N-1}$ such that
	$S_{T,h}(\psi_1^*,\psi_2^*,\ldots,\psi_{N_1}^*)=\inf\limits_{(\psi_1,\psi_2,\ldots,\psi_{N-1})\in\mbb R^{N-1}}S_{T,h}(\psi_1,\psi_2,\ldots,\psi_{N-1}).$
\end{lem}
\begin{proof}
	By Lemma \ref{SThmin}, $\hat S_{T,h}$ admits a minimizer $\varphi_h^*\in\mbf H^1_{x_0,x}(0,T;\mbb R^d)$. 
	Define
	$\widetilde{\varphi}_h\in\mcal M_{x_0,x}(0,T;\mbb R^d)$ as
	the linear interpolation of $\varphi^*_h$, i.e., 
	$\widetilde{\varphi}_h(t)=\varphi^*_h(t_n)+\frac{\varphi^*_h(t_{n+1})-\varphi^*_h(t_n)}{h}(t-t_n),~t\in[t_n,t_{n+1}],~n=0,1,\ldots,N-1$. Then it holds that
	\begin{align*}
		&\phantom{=}\hat{S}_{T,h}(\widetilde{\varphi}_h)\\
		&=\frac{1}{2}\sum_{n=0}^{N-1}\int_{t_n}^{t_{n+1}}\Big\vert \sigma^{-1}({\varphi}^*_h(t_n))\Big[\frac{1}{h}(\varphi^*_h(t_{n+1})-\varphi^*_h(t_n))-b((1-\theta){\varphi}^*_h(t_n)+\theta{\varphi}^*_h(t_{n+1}))\Big]\Big\vert ^2\ud t\\
		&=\frac{1}{2h}\sum_{n=0}^{N-1}\Big\vert \int_{t_n}^{t_{n+1}}\sigma^{-1}({\varphi}^*_h(t_n))\Big[(\varphi^*_h)'(t)-b((1-\theta){\varphi}^*_h(t_n)+\theta{\varphi}^*_h(t_{n+1}))\Big]\ud t\Big\vert ^2\\
		&\le \frac{1}{2}\sum_{n=0}^{N-1}\int_{t_n}^{t_{n+1}}\Big\vert \sigma^{-1}({\varphi}^*_h(t_n))\Big[(\varphi^*_h)'(t)-b((1-\theta){\varphi}^*_h(t_n)+\theta{\varphi}^*_h(t_{n+1}))\Big]\Big\vert ^2\ud t\\
		&=\hat{S}_{T,h}(\varphi^*_h).
	\end{align*}
	This implies that $\widetilde{\varphi}_h\in \mcal M_{x_0,x}(0,T;\mbb R^d)$ is also a minimizer of $\hat{S}_{T,h}$. Then by Remark \ref{rem2.6}, $S_{T,h}$ admits a minimizer, and the proof is complete.	
\end{proof}

\section{Convergence analysis}\label{Sec3}
In this section, we are devoted to analyzing the convergence of the minimum  and minimizers of $\hat{S}_{T,h}$. 
Hereafter, let $K(R)$ denote a generic constant depending on the parameter $R$ but independent of the step-size $h$, which may vary from one place to another. Denote  $\mathbf{B}_R:=\left\{\varphi\in\mbf H^1_{x_0}(0,T;\mbb R^d):\|\varphi\|_{\mbf H^1}\leq R\right\}$ and we have the following estimates, which establish the locally  uniform convergence of $\hat{S}_{T,h}$ to $S_T$.
\begin{lem}\label{localerror}
	For any $R>0$, there exists some constant $K_1(R)>0$ such that for any $h\in(0,1]$,
	\begin{align}\label{error1}
		\sup_{\varphi\in \mathbf{B}_R}\left\vert S_T(\varphi)-\hat{S}_{T,h}(\varphi)\right\vert \leq K_1(R)h^{1/2}.
	\end{align} 
	In particular, if $\sigma$ is an  invertible constant matrix, we have that for any $R>0$,  there is some constant $K_2(R)>0$ such that for any $h\in(0,1]$,
	\begin{align}\label{error2}
		\sup_{\varphi\in \mathbf{B}_R}\left\vert S_T(\varphi)-\hat{S}_{T,h}(\varphi)\right\vert \leq K_2(R)h.
	\end{align} 
\end{lem}
\begin{proof}
	Denote $f(t):=\sigma^{-1}(\varphi(t))\left(\varphi'(t)-b(\varphi(t))\right)$ and $g(t):=\sigma^{-1}(\varphi(\hat t))\big(\varphi'(t)\\-b((1-\theta)\varphi(\hat t)+\theta \varphi(\check t))\big)$ for a.e. $t\in[0,T]$. Then $S_T(\varphi)=\frac{1}{2}\|f\|_{\mbf L^2}^2$ and $\hat{S}_{T,h}(\varphi)=\frac{1}{2}\|g\|_{\mbf L^2}^2$. Hence,
	$\left\vert S_T(\varphi)-\hat{S}_{T,h}(\varphi)\right\vert 
	\leq \frac{1}{2}\left(\|f\|_{\mbf L^2}+\|g\|_{\mbf L^2}\right)\|f-g\|_{\mbf L^2}.$
	By means of Proposition \ref{Prop}(2), for any $\varphi\in \mathbf{B}_R$, $\|\varphi\|_0\leq \vert x_0\vert +\sqrt{T}\|\varphi\|_{\mbf H^1}\leq \vert x_0\vert +\sqrt{T}R$. Since $\sigma^{-1}$ is continuous, there is some constant $K(R)>0$ such that
	\begin{align}\label{sec4eq1}
		\|\sigma^{-1}\circ\varphi\|_0\leq\sup_{|x|\le \vert x_0\vert +\sqrt{T}R}\vert \sigma^{-1}(x)\vert \leq K(R)\quad\forall ~\varphi\in \mathbf{B}_R.
	\end{align}	
	Therefore, 
	\begin{align*}
		\|f\|_{\mbf L^2}\leq& K(R)\left(\|\varphi'\|_{\mbf L^2}+\|b(\varphi)\|_{\mbf L^2}\right)
		\leq K(R)\left(\|\varphi\|_{\mbf H^1}+\sqrt{T}\|b(\varphi)\|_0\right)\\
		\leq& K(R)\left(\|\varphi\|_{\mbf H^1}+\sqrt{T}L(1+\|\varphi\|_0)\right)
		\leq K(R)\quad\forall~\varphi\in \mathbf{B}_R.
	\end{align*}
	Noting that $\sup\limits_{t\in[0,T]}\left\vert \sigma^{-1}(\varphi(\hat t))\right\vert \leq \|\sigma^{-1}(\varphi)\|_0\leq K(R)$ for every $\varphi\in \mathbf{B}_R$, we have
	\begin{align*}
		\|g\|_{\mbf L^2}\leq& K(R)\left(\|\varphi\|_{\mbf H^1}+\sqrt{T}\sup_{t\in[0,T]}\left\vert b\left((1-\theta)\varphi(\hat t)+\theta\varphi(\check t)\right)\right\vert \right)\\
		\leq & K(R)\left(\|\varphi\|_{\mbf H^1}+L\sqrt{T}\left(1+\|\varphi\|_0\right)\right)
		\leq K(R)\quad\forall~\varphi\in \mathbf{B}_R.
	\end{align*}
	Accordingly, it holds that
	\begin{align}\label{sec4eq2}
		\left\vert S_T(\varphi)-\hat{S}_{T,h}(\varphi)\right\vert \leq K(R)\|f-g\|_{\mbf L^2}\quad\forall~\varphi\in \mathbf{B}_R.
	\end{align}
	Next, we decompose $f-g$ into 
	\begin{align}\label{sec4eq3}
		f(t)-g(t)=&\left(\sigma^{-1}(\varphi(t))-\sigma^{-1}(\varphi(\hat t))\right)\left(\varphi'(t)-b(\varphi(t))\right)\nonumber\\
		&-\sigma^{-1}(\varphi(\hat t))\left(b(\varphi(t))-b\left((1-\theta)\varphi(\hat t)+\theta\varphi(\check{t})\right)\right).
	\end{align}
	By Proposition \ref{Prop}(2), for any $t\in[0,T]$ and $\varphi\in \mathbf{B}_R$,
	\begin{align*}
		\left\vert \varphi(t)-\varphi(\hat t)\right\vert \leq (t-\hat t)^{1/2}\left(\int_{\hat t}^{t}\vert \varphi'(s)\vert ^2\ud s\right)^{1/2}
		\leq\|\varphi\|_{\mbf H^1}h^{1/2}\leq Rh^{1/2}
	\end{align*}
	and
	\begin{align*}
		&\left\vert \varphi(t)-\left((1-\theta)\varphi(\hat t)+\theta\varphi(\check t)\right)\right\vert 
		\leq(1-\theta)\vert \varphi(t)-\varphi(\hat t)\vert +\theta\vert \varphi(t)-\varphi(\check t)\vert \\
		\leq&(1-\theta)(t-\hat t)^{1/2}\left(\int_{\hat t}^{t}\vert \varphi'(s)\vert ^2\ud s\right)^{1/2}+\theta(\check{t}-t)^{1/2}\left(\int_{ t}^{\check t}\vert \varphi'(s)\vert ^2\ud s\right)^{1/2}\\
		\leq &(1-\theta)\|\varphi\|_{\mbf H^1}h^{1/2}+\theta\|\varphi\|_{\mbf H^1}h^{1/2}
		\leq Rh^{1/2}.
	\end{align*}
	Notice that $\sigma^{-1}$ is locally Lipschitz continuous due to Proposition \ref{Prop}(1) and $\|\varphi\|_0\leq \vert x_0\vert +\sqrt{T}R$ provided that $\varphi\in \mathbf{B}_R$. There exists $L_R>0$ such that for any $\varphi\in \mathbf{B}_R$,
	\begin{align}\label{sec4eq4}
		\sup_{t\in[0,T]}\left\vert \sigma^{-1}(\varphi(t))-\sigma^{-1}(\varphi(\hat t))\right\vert \leq L_R\sup_{t\in[0,T]}\vert \varphi(t)-\varphi(\hat t)\vert \leq K(R)h^{1/2}.
	\end{align}
	Since $b$ is globally Lipschitz continuous,
	\begin{align}\label{sec4eq5}
		&\;\sup_{t\in[0,T]}\left\vert b(\varphi(t))-b\left((1-\theta)\varphi(\hat t)+\theta\varphi(\check{t})\right)\right\vert \nonumber\\
		\leq&\; L\sup_{t\in[0,T]}\left\vert \varphi(t)-\left((1-\theta)\varphi(\hat t)+\theta\varphi(\check t)\right)\right\vert \leq K(R)h^{1/2}.
	\end{align}
	Combining \eqref{sec4eq1} and \eqref{sec4eq3}-\eqref{sec4eq5} leads to
	\begin{align}\label{sec4eq6}
		\|f-g\|_{\mbf L^2}&\leq \sup_{t\in[0,T]}\left\vert \sigma^{-1}(\varphi(t))-\sigma^{-1}(\varphi(\hat t))\right\vert \left(\|\varphi'\|_{\mbf L^2}+\|b(\varphi)\|_{\mbf L^2}\right)\nonumber\\
		&\quad+\sqrt{T}\|\sigma^{-1}(\varphi)\|_0\sup_{t\in[0,T]}\left\vert b(\varphi(t))-b\left((1-\theta)\varphi(\hat t)+\theta\varphi(\check{t})\right)\right\vert \nonumber\\
		&\leq K(R)h^{1/2}\left(\|\varphi\|_{\mbf H^1}+\sqrt{T}L(1+\|\varphi\|_0)\right)+K(R)h^{1/2}\nonumber\\
		&\leq K(R)h^{1/2}\quad\forall~\varphi\in \mathbf{B}_R,
	\end{align}
	where we have used the inequality $\|\varphi\|_0\leq \vert x_0\vert +\sqrt{T}R$ provided that $\varphi\in \mathbf{B}_R$. Plugging \eqref{sec4eq6} into \eqref{sec4eq2} yields \eqref{error1}.
	
	For the case that $\sigma$ is an  invertible constant matrix, we still have
	\begin{align*}
		\vert S_T(\varphi)-\hat{S}_{T,h}(\varphi)\vert \leq K(R)\|f-g\|_{\mbf L^2}\quad\forall ~\varphi\in \mathbf{B}_R.
	\end{align*}
	In order to prove \eqref{error2}, it suffices to show that $\|f-g\|_{\mbf L^2}\leq K(R)h$. Notice that in this case, $f(t)-g(t)=\sigma^{-1}\big(b\left((1-\theta)\varphi(\hat t)+\theta\varphi(\check t)\right)-b(\varphi(t))\big)$ for a.e. $t\in[0,T]$. Hence, for any $t\in[0,T]$,
	\begin{align*}
		\vert f(t)-g(t)\vert &\leq L\left\vert (1-\theta)\varphi(\hat t)+\theta\varphi(\check t)-\varphi(t)\right\vert \leq L\left(\vert \varphi(t)-\varphi(\hat t)\vert +\vert \varphi(\check{t})-\varphi(t)\vert \right).
	\end{align*}
	Further, we obtain
	\begin{align*}
		\|f-g\|_{\mbf L^2}\leq L\left(\int_0^T\vert \varphi(t)-\varphi(\hat t)\vert ^2\ud t\right)^{1/2}+L\left(\int_0^T\vert \varphi(\check t)-\varphi(t)\vert ^2\ud t\right)^{1/2}.
	\end{align*}
	By Proposition \ref{Prop}(2),
	\begin{align}\label{w12}\notag
		&\quad\ \int_0^T\vert \varphi(t)-\varphi(\hat t)\vert ^2\ud t=\sum_{n=0}^{N-1}\int_{t_n}^{t_{n+1}}\vert \varphi(t)-\varphi(t_n)\vert ^2\ud t\\\notag
		&\leq\sum_{n=0}^{N-1}\int_{t_n}^{t_{n+1}}(t-t_n)\int_{t_n}^{t}\vert \varphi'(s)\vert ^2\ud s\ud t
		\leq h\sum_{n=0}^{N-1}\int_{t_n}^{t_{n+1}}\int_{t_n}^{t_{n+1}}\vert \varphi'(s)\vert ^2\ud s\ud t\\
		&=h^2\sum_{n=0}^{N-1}\int_{t_n}^{t_{n+1}}\vert \varphi'(s)\vert ^2\ud s
		=h^2\|\varphi\|_{\mbf H^1}^2.
	\end{align}
	This indicates that $\left(\int_0^T\vert \varphi(t)-\varphi(\hat t)\vert ^2\ud t\right)^{1/2}\leq \|\varphi\|_{\mbf H^1}h\leq Rh$, for any $\varphi\in \mathbf{B}_R$. Similarly, one has  $\left(\int_0^T\vert \varphi(\check t)-\varphi(t)\vert ^2\ud t\right)^{1/2}\leq Rh$ for any $\varphi\in \mathbf{B}_R.$ As a consequence, we obtain that $\|f-g\|_{\mbf L^2}\leq 2LRh$, which completes the proof.
\end{proof}

Next, we give the convergence analysis of the minimum and minimizers of $\hat{S}_{T,h}$.  Our idea is to use the existence of minimizers and the equi-coerciveness of $S_{T,h}$ to reduce the error between minimums of $S_T$ and $\hat{S}_{T,h}$  to that between  $S_T$ and $\hat{S}_{T,h}$ on  bounded sets.

\begin{theo}\label{convergence}
	We have the following.
	\begin{itemize}
		\item[(1)] There is some constant $C_2>0$ such that for any $h\in(0,\frac{1}{2L}\wedge1]$,
		\begin{align*}
			\bigg|\inf_{\varphi\in\mbf H^1_{x_0,x}(0,T;\mbb R^d)}S_T(\varphi)-\inf_{\varphi\in\mbf H^1_{x_0,x}(0,T;\mbb R^d)}\hat{S}_{T,h}(\varphi)\bigg|\leq C_2h^{1/2}.
		\end{align*}
		\item[(2)] In particular, if $\sigma$ is an  invertible constant matrix, then there is some constant $C_3>0$ such that for any $h\in(0,\frac{1}{2L}\wedge1]$,
		\begin{align*}
			\bigg|\inf_{\varphi\in\mbf H^1_{x_0,x}(0,T;\mbb R^d)}S_T(\varphi)-\inf_{\varphi\in\mbf H^1_{x_0,x}(0,T;\mbb R^d)}\hat{S}_{T,h}(\varphi)\bigg| \leq C_3h.
		\end{align*}
	\end{itemize}
\end{theo}

\begin{proof}
	(1) 
	By Lemma \ref{STmin}, there exists  $\varphi^*\in\mbf H^1_{x_0,x}(0,T;\mbb R^d)$ such that 
	\begin{align*}
		S_T(\varphi^*)=\inf_{\varphi\in\mbf H^1_{x_0,x}(0,T;\mbb R^d)}S_T(\varphi).
	\end{align*}
	It follows from Lemma \ref{localerror} that 
	\begin{align}\label{k6}
		|S_T(\varphi^*)-\hat{S}_{T,h}(\varphi^*)|\le K_1h^{1/2}\quad\forall~h\in(0,\frac{1}{2L}\wedge1],
	\end{align}
	for some constant $K_1>0$.	Thus, for any $h\in(0,\frac{1}{2L}\wedge1]$, 
	\begin{align}\label{sec4eq8}
		\inf_{\varphi\in\mbf H^1_{x_0,x}(0,T;\mbb R^d)}\hat{S}_{T,h}(\varphi)\leq \hat{S}_{T,h}(\varphi^*)&\leq S_T(\varphi^*)+K_1h^{1/2}\nonumber\\
		&=\inf_{\varphi\in\mbf H^1_{x_0,x}(0,T;\mbb R^d)}S_T(\varphi)+K_1h^{1/2}.
	\end{align}
	
	On the other hand, it follows from  Lemma \ref{SThmin} that   for any $h\in(0,\frac{1}{2L}\wedge1]$, there exists $\varphi^*_h\in\mbf H^1_{x_0,x}(0,T;\mbb R^d)$ such that
	$\hat S_{T,h}(\varphi^*_h)=\inf\limits_{\varphi\in\mbf H^1_{x_0,x}(0,T;\mbb R^d)}\hat S_{T,h}(\varphi).$
	This combined with \eqref{sec4eq8} gives
	\begin{align*}
		\hat S_{T,h}(\varphi^*_h)\le m_T+K_1h^{1/2}\le m_T+K_1\quad\forall~h\in(0,\frac{1}{2L}\wedge1],
	\end{align*}
	where $m_T:=\inf\limits_{\varphi\in\mbf H^1_{x_0,x}(0,T;\mbb R^d)}S_T(\varphi)$.
	Further, by Lemma \ref{SThcoer}, 
	\begin{align*}
		\|\varphi^*_h\|_{\mbf H^1}\leq C_1e^{C_1\hat{S}_{T,h}(\varphi^*_h)}\leq K(m_T)\quad\forall~ h\in(0,\frac{1}{2L}\wedge1].
	\end{align*}
	Thus, we  can use Lemma \ref{localerror} to get
	\begin{align}\label{sec4eq9}
		\left\vert S_T(\varphi^*_h)-\hat{S}_{T,h}(\varphi^*_h)\right\vert \leq K_2h^{1/2}\quad\forall~h\in(0,\frac{1}{2L}\wedge1],
	\end{align}
	for some $K_2>0$.
	Consequently, one has that for any $h\in(0,\frac{1}{2L}\wedge1]$, 
	\begin{align*}
		\inf\limits_{\varphi\in\mbf H^1_{x_0,x}(0,T;\mbb R^d)}S_T(\varphi)\leq S_T(\varphi^*_h)
		&\leq \hat{S}_{T,h}(\varphi^*_h)+K_2h^{1/2}\\
		&=\inf\limits_{\varphi\in\mbf H^1_{x_0,x}(0,T;\mbb R^d)}\hat S_{T,h}(\varphi)+K_2h^{1/2}.
	\end{align*}
	From the above formula and \eqref{sec4eq8}, it follows that for any $h\in(0,\frac{1}{2L}\wedge1]$,
	\begin{align}\label{sec4eq10}
		\bigg|\inf_{\varphi\in\mbf H^1_{x_0,x}(0,T;\mbb R^d)}S_T(\varphi)-\inf_{\varphi\in\mbf H^1_{x_0,x}(0,T;\mbb R^d)}\hat{S}_{T,h}(\varphi)\bigg|\leq (K_1+ K_2)h^{1/2}.
	\end{align}

	(2) In this case, one can use \eqref{error2} to improve the estimates of \eqref{k6} and $\eqref{sec4eq9}$. More precisely, one can similarly prove that there exists some $K_3>0$ such that 
	\begin{gather*}
		\big|S_T(\varphi^*)-\hat S_{T,h}(\varphi^*)\big|\le K_3h\quad\forall~h\in(0,\frac{1}{2L}\wedge1],\\
		\big|S_T(\varphi_h^*)-\hat S_{T,h}(\varphi_h^*)\big|\le K_3h\quad\forall~h\in(0,\frac{1}{2L}\wedge1].
	\end{gather*}
	Analogous to the proof of \eqref{sec4eq10}, we obtain the second conclusion, which
	completes the proof.
\end{proof}

We close this section by presenting the convergence of minimizers of $\hat S_{T,h}$ as $h\to0$.
\begin{theo}\label{convergence2}
	Let $\varphi_h\in\mbf H^1_{x_0,x}(0,T;\mbb R^d)$ be the minimizer of $\hat{S}_{T,h}$, $h\in(0,\frac{1}{2L}\wedge 1]$. Then there is a subsequence of $\{\varphi_h\}$ that converges to  some minimizer of $S_T$ w.r.t. the weak topology of $\mbf H^1(0,T;\mbb R^d)$. Moreover, if $\varphi^*$ is the unique minimizer of $S_T$, then $\{\varphi_h\}$ converges weakly in $\mbf H^1_{x_0,x}(0,T;\mbb R^d)$ to $\varphi^*$.	
\end{theo}
\begin{proof}
	It follows from Theorem \ref{convergence} that
	\begin{align}\label{k7}
		\lim_{h\to0}\hat{S}_{T,h}(\varphi_h)=\lim_{h\to0}\inf_{\varphi\in\mbf H^1_{x_0,x}(0,T;\mbb R^d)}\hat{S}_{T,h}(\varphi)=\inf_{\varphi\in\mbf H^1_{x_0,x}(0,T;\mbb R^d)}S_{T}(\varphi).
	\end{align}
	This implies that there exists some $h_0>0$ such that
	$\sup\limits_{h\in(0,h_0)}\hat{S}_{T,h}(\varphi_h)<+\infty$. Then an application of Lemma \ref{SThcoer} yields
	$\sup\limits_{h\in(0,h_0)}\|\varphi_h\|_{\mbf H^1}<+\infty$. Thus, there is a subsequence $\{\varphi_{h_n}\}_{n\in\mbb N^+}$ ($\lim\limits_{n\to\infty}h_n=0$) of $\{\varphi_h\}$ that converges weakly to some $\varphi_0\in\mbf H^1(0,T;\mbb R^d)$. Since $\mbf H^1(0,T;\mbb R^d)$ is compactly embedded into $\mbf C([0,T],\mbb R^d)$, $\lim\limits_{n\to\infty}\|\varphi_{h_n}-\varphi_0\|_0=0$, and thus $\varphi_0\in\mbf H^1_{x_0,x}(0,T;\mbb R^d)$. Further, by  $R:=\sup\limits_{n\in\mbb N^+}\|\varphi_{h_n}\|_{\mbf H^1}<+\infty$ and Lemma \ref{localerror}, we  have
	\begin{align}\label{k8}
		\lim_{n\to\infty}|S_T(\varphi_{h_n})-\hat{S}_{T,h_n}(\varphi_{h_n})|\le \lim_{n\to\infty}\sup_{\varphi\in \mbf B_R}|S_T(\varphi)-\hat{S}_{T,h_n}(\varphi)|=0.
	\end{align}
	It follows from Proposition \ref{STlower} and \eqref{k7}-\eqref{k8} that
	\begin{align*}
		S_T(\varphi_0)\le \liminf_{n\to\infty}S_T(\varphi_{h_n})&=\lim_{n\to\infty}(S_T(\varphi_{h_n})-\hat{S}_{T,h_n}(\varphi_{h_n}))+\liminf_{n\to\infty}\hat{S}_{T,h_n}(\varphi_{h_n})\\
		&=\inf_{\varphi\in\mbf H^1_{x_0,x}(0,T;\mbb R^d)}S_T(\varphi).
	\end{align*}
	Accordingly, $\varphi_0$ is a minimizer of $S_T$.
	
	In addition, if $\varphi^*$ is the unique minimizer of $S_T$, then every subsequence of $\{\varphi_h\}$ further contains a subsubsequence of $\{\varphi_h\}$ which converges weakly in $\mbf H^1_{x_0,x}(0,T;\mbb R^d)$ to $\varphi^*$, by using the conclusion of the first part. Thus, the whole minimizer sequence $\{\varphi_h\}$ converges weakly to $\varphi^*$. The proof is complete.
\end{proof}

We  note that the  convergence order of the minimum of $\hat{S}_{T,h}$  in the case of additive noises is $1$, higher than the  convergence order $1/2$ for the case of multiplicative noises.
As is shown in Theorem \ref{convergence}, in deriving the convergence order of  the minimum of $\hat{S}_{T,h}$, one prerequisite is the equi-coerciveness  of $\{\hat{S}_{T,h}\}_{h>0}$ in terms of $\mbf H^1$-norm (see Lemma \ref{SThcoer}). 
This allows us to reduce the convergence order of $\inf\limits_{\varphi\in\mbf H^1_{x_0,x}(0,T;\mbb R^d)}\hat S_{T,h}(\varphi)$ to the local uniform error order of $|\hat S_{T,h}-S_{T}|$ on $\mbf H^1(0,T;\mbb R^d)$.  	The key to getting $\sup\limits_{\varphi\in \mathbf{B}_R}|S_T(\varphi)-\hat{S}_{T,h}(\varphi)|=\mcal O(h)$, in the additive  noises case, lies in that $\left(\int_0^T\vert \varphi(t)-\varphi(\hat t)\vert ^2\ud t\right)^{1/2}\leq \|\varphi\|_{\mbf H^1}h$  for any $\varphi\in\mbf H^1_{x_0,x}(0,T;\mbb R^d)$ (see the proof of Lemma \ref{localerror}). This is not applicable to the multiplicative noises case, due to the presence  of   $\left(\sigma^{-1}(\varphi(\cdot))-\sigma^{-1}(\varphi(\hat \cdot))\right)\varphi'(\cdot)$. In order to improve the  estimate of
$\int_0^T\left\vert \left(\sigma^{-1}(\varphi(t))-\sigma^{-1}(\varphi(\hat t))\right)\varphi'(t)\right\vert ^2\ud t$ by the H\"older inequality, one needs $\varphi'$ to be $\mbf L^p$-integrable ($p\ge4$).  In fact, similar to the proof of \eqref{w12}, it holds that  for $p\ge4$,
\begin{align}\label{w1p}
	\sup_{\{\|\varphi\|_{\mbf W^{1,p}}\le R\}}\left\vert S_T(\varphi)-\hat{S}_{T,h}(\varphi)\right\vert \leq K(R)h 
\end{align}
for multiplicative noises case. Further, if the equi-coerciveness  of  $\{\hat{S}_{T,h}\}_{h>0}$ in terms of $\mbf W^{1,p}$-norm ($p\ge 4$) holds, i.e.,  there is $C>0$ such that for sufficiently small $h$ and any $\varphi\in\mbf W^{1,p}(0,T;\mbb R^d)$, 
\begin{equation}\label{eq:varphi}
	\|\varphi\|_{\mbf W^{1,p}}\le Ce^{C\hat{S}_{T,h}(\varphi)},
\end{equation}
then it is possible to obtain the first order convergence of $\inf\limits_{\varphi\in\mbf H^1_{x_0,x}(0,T;\mbb R^d)}\hat{S}_{T,h}(\varphi)$ for the case of multiplicative noises, as is done in the proof of Theorem \ref{convergence}.   
However, even for the simple case that $b\equiv0$ and $\sigma\equiv I_d$, one can only obtain  $\hat{S}_{T,h}(\varphi)=\frac{1}{2}\|\varphi^\prime\|_{\mbf L^2}^2$, which implies that \eqref{eq:varphi} fails to hold for $p\ge 4$. Hence the convergence order of $\inf\limits_{\varphi\in\mbf H^1_{x_0,x}(0,T;\mbb R^d)}\hat{S}_{T,h}$ is restricted to  $1/2$ for the moment. 
\begin{rem}
	We remark that  minimizers of $S_T$ will solve the Euler--Lagrange equation associated with $S_T$ (see \eqref{EulerS}). Thus, if the corresponding Euler--Lagrange equation admits a unique solution, then the minimizer of $S_T$ is unique.
\end{rem}

\section{Large deviation convergence of stochastic $\theta$-method}\label{Sec4}
In this section, we show that Theorem \ref{convergence} can be applied to analyzing the pointwise convergence of LDRFs of stochastic $\theta$-method for \eqref{SDE}. This reveals that the stochastic $\theta$-method can asymptotically preserve the large deviations principle (LDP) of $\{X^\epsilon(T)\}_{\epsilon>0}$.

We begin with a basic introduction to the LDP; see, e.g., \cite{ChenX,Dembo}. Let $\mcal X$ be a \emph{Polish space}, i.e., complete and separable metric space.  
A real-valued function $I:\mcal X\rightarrow[0,\infty]$ is called a \emph{rate function} if it is lower semicontinuous, i.e., for each $a\in[0,\infty)$, the level set $I^{-1}([0,a])$ is a closed subset of $\mcal X$. If all level sets $I^{-1}([0,a])$, $a\in[0,\infty)$, are compact, then $I$ is called a \emph{good rate function}.
Let $I$ be a rate function and $\{\mu_\epsilon\}_{\epsilon>0}$ a family of probability measures on  $\mcal X$. We say that $\{\mu_\epsilon\}_{\epsilon>0}$ satisfies an LDP on $\mcal X$ with the rate function $I$ if
\begin{flalign}
	(\rm{LDP 1})\qquad \qquad&\liminf_{\epsilon\to 0}\epsilon\ln(\mu_\epsilon(U))\geq-\inf I(U)\qquad\text{for every open}~ U\subseteq \mcal X,\nonumber&\\
	(\rm{LDP 2})\qquad\qquad &\limsup_{\epsilon\to 0}\epsilon\ln(\mu_\epsilon(C))\leq-\inf I(C)\qquad\text{for every closed}~ C\subseteq \mcal X.&\nonumber
\end{flalign}
Moreover,  a family of random variables  $\{Z_{\epsilon}\}_{\epsilon>0}$ valued on $\mcal X$ is said to satisfy an LDP with the rate function $I$, if its distribution $\{\mbf P\circ Z_{\epsilon}^{-1}\}_{\epsilon>0}$ satisfies  (LDP1) and  (LDP2). 

It is shown in \cite{LDPofSDE} that $\{X^\epsilon\}_{\epsilon>0}$ satisfies the LDP on $\mbf C_{x_0}([0,T];\mbb R^d)$ with the good rate function $J$ given by
\begin{align}\label{J}
	J(\varphi):=
	\begin{cases}
		S_T(\varphi),\quad\varphi\in\mbf H^1_{x_0}(0,T;\mbb R^d),\\
		+\infty,\qquad \varphi\in\mbf C_{x_0}([0,T],\mbb R^d)-\mbf H^1_{x_0}(0,T;\mbb R^d).
	\end{cases}
\end{align}
Define the coordinate map $\xi_T:\mbf C_{x_0}([0,T],\mbb R^d)\to \mbb R^d$ by $\xi_T(f)=f(T)$, for each $f\in\mbf C_{x_0}([0,T],\mbb R^d)$. Then we have $X^\epsilon(T)=\xi_T(X^\epsilon)$. Hence the continuity of the map $\xi_T$ and the contraction principle \cite[Theorem 4.2.1]{Dembo} give that 
$\left\{X^\epsilon(T)\right\}_{\epsilon>0}$ satisfies an LDP on $\mbb R^d$ with the good rate function $I(x)=\inf\limits_{\{\varphi\in\mbf C_{x_0}([0,T],\mbb R^d):\varphi(T)=x\}}J(\varphi),~x\in\mbb R^d.$
It can be verified that $I(x)$ is nothing but the minimum of $S_T$ on $\mbf H^1_{x_0,x}(0,T;\mbb R^d)$, i.e.,
\begin{align*}
	I(x)=\inf_{\varphi\in\mbf H^1_{x_0,x}(0,T;\mbb R^d)}S_T(\varphi)\quad\forall~x\in\mbb R^d.
\end{align*}
Let $X^0$ be the solution of the following  equation
\begin{align}\label{ODE}
	X^0(t)=x_0+\int_0^tb(X^0(s))\ud s,\quad t\in[0,T].
\end{align}
It can be verified that $X^\epsilon(T)$ converges to $X^0(T)$ in probability as $\epsilon\to 0$, i.e., for any $\delta>0$, $\lim\limits_{\epsilon\to0}\mbf P(\vert X^\epsilon(T)-X^0(T)\vert \geq\delta)=0$. As a direct consequence of the  LDP of  $\{X^\epsilon(T)\}_{\epsilon>0}$, one can characterize the decay speed of the probability $\mbf P(\vert X^\epsilon(T)-X^0(T)\vert \geq\delta)$ as $\epsilon\to0$ on an exponential scale.
\begin{cor}\label{cor2.6}
	The following properties hold.
	\begin{itemize}
		\item[(1)] $I(x)=0$ if and only if $x=X^0(T)$. 
		\item[(2)] Let $\delta>0$ be fixed and define $C(\delta):=\inf\limits_{\{x\in\mbb R^d:\vert x-X^0(T)\vert \geq\delta \}}I(x)$. Then $C(\delta)>0$ and for any $\eta\in(0,C(\delta))$, there exists some constant $\epsilon_0(\eta)>0$ such that for any $\epsilon\in(0,\epsilon_0(\eta))$,
		\begin{align}\label{decay}
			\mbf P(\vert X^\epsilon(T)-X^0(T)\vert \geq\delta)<e^{-\frac{1}{\epsilon}(C(\delta)-\eta)}.
		\end{align} 
	\end{itemize}
\end{cor}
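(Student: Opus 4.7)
The plan is to combine the variational formula for $I$ from Theorem \ref{LDPXT}, the good rate function property of $I$, and the upper-bound clause (LDP 2) of Definition \ref{LDPdef}.

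For part (1), the implication $x=X^0(T)\Rightarrow I(x)=0$ is immediate: plugging the control $f\equiv 0$ into the skeleton map $\Gamma$ returns the ODE solution $\Gamma(0)=X^0$, hence $J(X^0)\le\tfrac{1}{2}\|0\|_{\mbf L^2}^2=0$, and consequently $I(X^0(T))\le J(X^0)=0$. For the converse, assume $I(x)=0$ and choose $\varphi_n\in\mbf C_{x_0}([0,T],\mbb R^d)$ with $\varphi_n(T)=x$ and $J(\varphi_n)\to 0$, and for each $n$ an $f_n\in\mbf L^2(0,T;\mbb R^m)$ with $\Gamma(f_n)=\varphi_n$ and $\tfrac{1}{2}\|f_n\|_{\mbf L^2}^2\to 0$. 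Then $f_n\to 0$ in $\mbf L^2$, and by continuity of $\Gamma$ (a short Gronwall argument that uses only Assumption (A1)) one obtains $\varphi_n=\Gamma(f_n)\to\Gamma(0)=X^0$ uniformly on $[0,T]$, so $x=\lim_n\varphi_n(T)=X^0(T)$.

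For part (2), I would first prove $C(\delta)>0$ by contradiction. If $C(\delta)=0$, there exists $\{x_n\}$ with $|x_n-X^0(T)|\ge\delta$ and $I(x_n)\to 0$. Since $I$ is good, the level set $I^{-1}([0,1])$ is compact, so for $n$ large all $x_n$ lie in this compact set and a subsequence converges to some $x^*\in\mbb R^d$ with $|x^*-X^0(T)|\ge\delta$. Lower semicontinuity of $I$ yields $I(x^*)\le\liminf_n I(x_n)=0$, and by part (1) $x^*=X^0(T)$, a contradiction. The decay estimate \eqref{decay} then follows from (LDP 2) applied to the closed set $\{x\in\mbb R^d:|x-X^0(T)|\ge\delta\}$, which gives
\begin{align*}
\limsup_{\epsilon\to 0}\epsilon\ln\mbf P\left(|X^\epsilon(T)-X^0(T)|\ge\delta\right)\le -C(\delta).
\end{align*}
For each $\eta>0$, the definition of $\limsup$ then produces $\epsilon_0(\eta)>0$ such that $\epsilon\ln\mbf P(|X^\epsilon(T)-X^0(T)|\ge\delta)<-C(\delta)+\eta$ for every $\epsilon<\epsilon_0(\eta)$, and exponentiating yields \eqref{decay}.

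I expect the only conceptually delicate point to be the converse direction of (1); the minimizing-sequence argument above is cleaner than invoking attainment of the infimum through the contraction principle, since it requires nothing beyond the Gronwall-based continuity of $\Gamma$ together with uniqueness for the ODE \eqref{ODE}.
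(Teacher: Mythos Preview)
Your proposal is correct and follows essentially the same route as the paper: the forward direction of (1) via the zero control, the converse via minimizing sequences $f_n\to 0$ in $\mbf L^2$ together with a Gronwall argument for $\Gamma(f_n)\to X^0$, and part (2) via goodness of $I$ plus the LDP upper bound. The only cosmetic difference is in showing $C(\delta)>0$: the paper invokes directly the standard fact that a good rate function attains its infimum on a nonempty closed set (so some $x^*$ with $|x^*-X^0(T)|\ge\delta$ satisfies $I(x^*)=C(\delta)$, and part (1) forces $I(x^*)>0$), whereas you reprove this fact by a sequential compactness/lower-semicontinuity argument---both are fine and amount to the same thing.
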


The LDP of $\{X^\epsilon(T)\}_{\epsilon>0}$ means that for a Borel measurable set $A\subseteq\mbb R^d$, the hitting probability $\mbf P(X^\epsilon(T)\in A)\;\asymp\; e^{-\frac{1}{\epsilon}\inf\limits I(A)}$ $(\epsilon\rightarrow 0)$. 
A natural problem is whether a numerical approximation $Y^\epsilon_N$ of $X^{\epsilon}(T)$ can asymptotically preserve the exponential decay speed of $\mbf P(X^\epsilon(T)\in A)$, in the sense that  for any $N\in\mbb N^+$, $\{Y^\epsilon_N\}_{\epsilon>0}$ satisfies the LDP and its LDRF converges to $I$ as $N\to\infty$. Based on Theorem \ref{convergence}, we show in this section that the stochastic  $\theta$-method shares
the asymptotical preservation for the exponential decay of $\mbf P(X^\epsilon(T)\in A)$. This reveals  the practicality of using the stochastic $\theta$-method to simulate probabilities of rare events associated with \eqref{SDE}.

The stochastic $\theta$-method  for \eqref{SDE} reads
\begin{align}\label{theta}
	X^\epsilon_{n+1}=X^\epsilon_{n}+b\left((1-\theta)X^\epsilon_{n}+\theta X^\epsilon_{n+1}\right)h+\sqrt{\epsilon}\sigma(X^\epsilon_{n})\DW_n,~ n=0,1,\ldots,N-1,
\end{align}
where $\DW_n=W(t_{n+1})-W(t_n)$  is the increment of Brownian motion. 
Next we  give the  LDP of $\{X^\epsilon_N\}_{\epsilon>0}$.
\begin{theo}\label{LDPXN}
	For any $h\in(0,\frac{1}{2L}]$, $\{X^\epsilon_N\}_{\epsilon>0}$ satisfies the LDP on $\mbb R^d$ with the good rate function $I^h$ given by
	\begin{align}\label{Ih}
		I^h(x)=\inf_{\varphi\in\mbf H^1_{x_0,x}(0,T;\mbb R^d)}\hat{S}_{T,h}(\varphi)\quad\forall~x\in\mbb R^d.
	\end{align}
\end{theo}
\begin{proof}
	In this proof, we use $K(g_1,h,x_0)$ to denote a generic constant depending on $g_1,h$ and $x_0$ but independent of $g_2$, which may vary from one place to another, where $g_1$ and $g_2$ will be specified below. First we introduce the continuous version $\{\bar{X}^\epsilon(t),t\in[0,T]\}$ of the stochastic $\theta$-method \eqref{theta}:
	\begin{align*}
		\bar{X}^\epsilon(t)=x_0+\int_{0}^{t}b\left((1-\theta)\bar{X}^\epsilon(\hat s)+\theta\bar{X}^\epsilon(\check s)\right)\ud s+\sqrt{\epsilon}\int_{0}^{t}\sigma(\bar{X}^\epsilon(\hat s))\ud W(s)\quad\forall~t\in[0,T].
	\end{align*} 
	Recall that $\hat s:=\max\left(\left\{t_0,t_1,\ldots,t_N\right\}\cap[0,s]\right)$ and  $\check{s}:=\min\left(\left\{t_0,t_1,\ldots,t_N\right\}\cap[s,T]\right)$ for each $s\in[0,T]$.	Then it suffices to show that  $\{\bar{X}^\epsilon(T)\}_{\epsilon>0}$ satisfies the LDP with the good rate function $I^h$, due to $\bar{X}^\epsilon(T)=X^\epsilon_N$. 
	
	For any fixed $h\in(0,\frac{1}{2L}]$, define the map $F^h:\mbf C_{0}([0,T],\mbb R^m)\to\mbf C_{x_0}([0,T],\mbb R^d)$ by $f=F^h(g)$, where $f$ is the unique continuous solution of
	\begin{align*}
		f(t)=x_0+\int_0^tb\left((1-\theta)f(\hat s)+\theta f(\check s)\right)\ud s+\int_{0}^{t}\sigma(f(\hat s))\ud g(s)\quad\forall~t\in[0,T].
	\end{align*}
	Next we prove that $F^h$ is continuous. Let $g_1\in\mbf C_{0}([0,T],\mbb R^m)$ be fixed and denote $f_1=F^h(g_1)$.  By the definition of $F^h$, $f_1(0)=x_0$ and for any $t\in[t_n,t_{n+1}]$, $n=0,1,\ldots,N-1$,
	\begin{align}\label{f1}
		f_1(t)=f_1(t_n)+b\left((1-\theta)f_1(t_n)+\theta f_1(t_{n+1})\right)(t-t_n)+\sigma(f_1(t_n))(g_1(t)-g_1(t_n)).
	\end{align}
	It follows from  \eqref{linear} and \eqref{f1} that for $n=0,1,\ldots,N-1$,
	\begin{align*}
		\vert f_1(t_{n+1})\vert \leq\vert f_1(t_n)\vert +hL\left(1+\vert f_1(t_n)\vert +\vert f_1(t_{n+1})\vert \right)+2L\left(1+\vert f_1(t_n)\vert \right)\|g_1\|_0.
	\end{align*}
	Noting that $hL\leq\frac{1}{2}$ for $h\in(0,\frac{1}{2L}]$, we have that for $n=0,1,\ldots,N-1$,
	\begin{align*}
		\vert f_1(t_{n+1})\vert \leq\left(\frac{3}{2}+2L\|g_1\|_0\right)\vert f_1(t_n)\vert +\frac{1}{2}+2L\|g_1\|_0+\frac{1}{2}\vert f_1(t_{n+1})\vert,
	\end{align*}
	which yields that for $n=0,1,\ldots,N-1$,
	\begin{align*}
		\vert f_1(t_{n+1})\vert \leq\left(3+4L\|g_1\|_0\right)\vert f_1(t_n)\vert +1+4L\|g_1\|_0\leq C(g_1)\left(1+\vert f_1(t_n)\vert \right)
	\end{align*}
	with $C(g_1):=3+4L\|g_1\|_0$.
	By iteration, it holds that
	\begin{align*}
		\vert f_1(t_n)\vert 
		&\leq C(g_1)+C^2(g_1)+\cdots+ C^n(g_1)+C^n(g_1)\vert f_1(0)\vert ,\quad n=0,1,\ldots,N.
	\end{align*}
	Accordingly, one immediately has
	\begin{align*}
		\sup_{n=0,1,\ldots,N}\vert f_1(t_n)\vert \leq\sum_{i=1}^{N}C^i(g_1)+C^N(g_1)\vert x_0\vert =\frac{C^{T/h+1}(g_1)-C(g_1)}{C(g_1)-1}+C^{T/h}(g_1)\vert x_0\vert .
	\end{align*} 
	This is to say, $\sup\limits_{n=0,1,\ldots,N}\vert f_1(t_n)\vert \leq K(g_1,h,x_0)$, which along with \eqref{f1} gives that for any $t\in[t_n,t_{n+1}]$, $n=0,1,\ldots, N-1$,
	\begin{align*}
		\vert f_1(t)\vert &\leq\sup\limits_{n=0,1,\ldots,N}\vert f_1(t_n)\vert +hL\Big(1+(1-\theta)\sup\limits_{n=0,1,\ldots,N}\vert f_1(t_n)\vert +\theta\sup\limits_{n=0,1,\ldots,N}\vert f_1(t_n)\vert \Big)\\
		&\quad+2L\Big(1+\sup\limits_{n=0,1,\ldots,N}\vert f_1(t_n)\vert \Big)\|g_1\|_0\\
		&\leq K(g_1,h,x_0).
	\end{align*}
	In this way, we have $\|f_1\|_0\leq K(g_1,h,x_0)$.
	
	Take $g_2\in \bar B(g_1,1)$ and set $f_2=F^h(g_2)$. Then $f_2(0)=x_0$ and for any $t\in[t_n,t_{n+1}]$, $n=0,1,\ldots, N-1$,
	\begin{align}\label{f2}
		f_2(t)=f_2(t_n)+b\left((1-\theta)f_2(t_n)+\theta f_2(t_{n+1})\right)(t-t_n)+\sigma(f_2(t_n))(g_2(t)-g_2(t_n)).
	\end{align}
	Denote $e(t):=f_1(t)-f_2(t)$ for any $t\in[0,T]$. It follows from \eqref{f1} and \eqref{f2} that for any $t\in[t_n,t_{n+1}]$, $n=0,1,\ldots, N-1$,
	\begin{align*}
		e(t)&=e(t_n)+\left[b\left((1-\theta)f_1(t_n)+\theta f_1(t_{n+1})\right)-b\left((1-\theta)f_2(t_n)+\theta f_2(t_{n+1})\right)\right](t-t_n)\\
		&\quad+\sigma(f_1(t_n))\left(g_1(t)-g_1(t_n)-(g_2(t)-g_2(t_n))\right)\\
		&\quad+\left(\sigma(f_1(t_n))-\sigma(f_2(t_n))\right)(g_2(t)-g_2(t_n)).
	\end{align*}
	Applying the estimate $\|f_1\|_0\leq K(g_1,h,x_0)$,
	\eqref{lip} and \eqref{linear}, we have 
	\begin{align}\label{et}
		\vert e(t)\vert \leq&\; \vert e(t_n)\vert +hL\left(\vert e(t_n)\vert +\vert e(t_{n+1})\vert \right)\nonumber\\
		&\;+2L\left(1+\|f_1\|_0\right)\|g_1-g_2\|_0+2L\vert e(t_n)\vert \left(1+\|g_1\|_0\right)\nonumber \\
		\leq&\;  K(g_1,h,x_0)\left(\vert e(t_n)\vert +\|g_1-g_2\|_0\right)+hL\vert e(t_{n+1})\vert 
	\end{align}
	for any $t\in[t_n,t_{n+1}],~n=0,1,\ldots,N-1$,	where we have used the fact $\|g_2\|_0\leq \|g_1\|_0+1$ for any $g_2\in\bar B(g_1,1)$. By \eqref{et} and $h\leq\frac{1}{2L}$,
	\begin{align*}
		\vert e(t_{n+1})\vert \leq K(g_1,h,x_0)\left(\vert e(t_n)\vert +\|g_1-g_2\|_0\right),\quad n=0,1,\ldots,N-1.
	\end{align*}
	Using the iteration argument, one has 
	\begin{align*}
		\vert e(t_n)\vert \leq K^n(g_1,h,x_0)\vert e(0)\vert +\sum_{i=1}^nK^i(g_1,h,x_0)\|g_1-g_2\|_0,\quad n=1,2,\ldots,N.
	\end{align*}
	From the above formula and $e(0)=0$, it follows that
	\begin{align}\label{sec2eq1}
		\sup\limits_{n=0,1,\ldots,N}\vert e(t_n)\vert \leq K(g_1,h,x_0)\|g_1-g_2\|_0.
	\end{align}
	Substituting \eqref{sec2eq1} into \eqref{et} yields $\|e\|_0\leq K(g_1,h,x_0)\|g_1-g_2\|_0$, which immediately leads to $\lim\limits_{g_2\to g_1}\|F^h(g_2)-F^h(g_1)\|_0=\lim\limits_{g_2\to g_1}\|e\|_0=0$. This shows that for given $h\leq\frac{1}{2L}$, $F^h$ is continuous. 
	
	Denote $W_{\epsilon}(t)=\sqrt{\epsilon}W_t$, $t\in[0,T]$. Then   $\{W_{\epsilon}\}_{\epsilon>0}$ obeys an LDP on $\mbf C_0([0,T],\mbb R^m)$ (see e.g., \cite[Theorem 5.2.3]{Dembo}) with   
	the good rate function
	\begin{align*}
		I_w(\phi)=
		\begin{cases}
			\frac{1}{2}\int_{0}^{T}\vert \phi'(t)\vert ^2\ud t,\quad&\phi\in\mbf H^1_0(0,T;\mathbb R^m),\\
			+\infty, \quad&\text{otherwise}.		
		\end{cases}
	\end{align*}
	Noting $\bar X^\epsilon=F^h(\sqrt{\epsilon}W)$, we use \cite[Theorem 4.2.1]{Dembo} and the continuity of $F^h$ to conclude that $\left\{\bar{X}^\epsilon\right\}_{\epsilon>0}$ satisfies the LDP on $\mbf C_{x_0}\left([0,T],\mbb R^d\right)$ with the good rate function $\bar J^h$ given by 
	\begin{align*}
		&\;\bar J^h(\varphi)\\
		=&\;\inf_{\{g\in\mbf C_0([0,T],\mbb R^m):F^h(g)=\varphi\}}I_w(g)\\
		=&\;\inf_{\{g\in\mbf H^1_0(0,T;\mbb R^m):F^h(g)=\varphi\}}\frac{1}{2}\int_{0}^T\vert g'(t)\vert ^2\ud t\\
		=&\;\inf_{\{g\in\mbf H^1_0(0,T;\mbb R^m):\varphi(t)=x_0+\int_{0}^tb((1-\theta)\varphi(\hat{s})+\theta\varphi(\check{s}))\ud s+\int_{0}^t\sigma(\varphi(\hat s))g'(s)\ud s, \,t\in[0,T]\}}\frac{1}{2}\int_{0}^T\vert g'(t)\vert ^2\ud t
	\end{align*}
	for any $\varphi\in\mbf C_{x_0}([0,T],\mbb R^d)$. Since $\mbf H^1_0(0,T;\mbb R^m)$ is isomorphic to $\mbf L^2(0,T;\mbb R^m)$ and $\sigma$ is invertible everywhere,
	we have
	\begin{align*}
		\bar J_h(\varphi):=
		\begin{cases}
			\hat S_{T,h}(\varphi),\quad\varphi\in\mbf H^1_{x_0}(0,T;\mbb R^d),\\
			+\infty,\qquad \varphi\in\mbf C_{x_0}([0,T],\mbb R^d)-\mbf H^1_{x_0}(0,T;\mbb R^d).
		\end{cases}
	\end{align*}	 
	According to the definition of the coordinate map $\xi_T$, $X^\epsilon_N=\bar{X}^\epsilon(T)=\xi_T(\bar{X}^\epsilon)$. Again by \cite[Theorem 4.2.1]{Dembo} and the continuity of $\xi_T$, $\{X^\epsilon_N\}_{\epsilon>0}$ satisfies the LDP on $\mbb R^d$ with the good rate function 
	$$I^h(x)=\inf_{\varphi\in\mbf C_{x_0}([0,T];\mbb R^d),\varphi(T)=x}\bar{J}_h(\varphi)=\inf_{\varphi\in\mbf H^1_{x_0,x}(0,T;\mbb R^d)}\hat{S}_{T,h}(\varphi).$$ 
	Thus the proof is complete.
\end{proof}

Now we can apply Theorem \ref{convergence} to giving the convergence of the LDRF $I^h$ of the stochastic $\theta$-method.
\begin{cor}\label{Ihconvergence}
	The numerical solution $\{X^\epsilon_N\}_{\epsilon>0}$ of the stochastic $\theta$-method converges to $\{X^\epsilon(T)\}_{\epsilon>0}$ in large deviations, in the sense that the LDRF $I^h$ of $\{X^\epsilon_N\}_{\epsilon>0}$ converges pointwise to the LDRF $I$ of $\{X^\epsilon(T)\}_{\epsilon>0}$.  And the  convergence order of $I^h$ is $\frac{1}{2}$. Especially, if $\sigma$ is an invertible constant matrix, the convergence order of $I^h$ is $1$.
\end{cor}

\section{Conclusions and future work}\label{Sec6}
The MAM is usually used to study the small-noise-induced transition for nongradient SDEs with small noise, whose central task is to 
numerically solve minimums and minimizers of F-W action functions. In this work, we give a rigorous convergence analysis for an FDM of the MAM, and obtain the  convergence order of the minimum of the discrete F-W action function $\hat{S}_{T,h}$. In addition, the convergence of minimizer sequences of $\hat{S}_{T,h}$ is also presented.
The main novelty of this work is twofold. 
\begin{itemize}
	\item [(1)]
	We first give the convergence rate of minimums of F-W action functionals discretized by  FDMs theoretically for nonlinear SDEs, which provides a supporting for the effectiveness of  MAMs based on the FDM. 
	
	\item [(2)]  We develop a new approach to analyzing the convergence of MAMs based on the equi-coerciveness and locally uniform convergence of discrete F-W action functionals, which can give the convergence rate of their minimums. This is not shared by the theory of $\Gamma$-convergence that only derives the convergence of minimums of parametric minimization problems.  
\end{itemize}

Concerning the future work, we would like to refer to an alternative	idea to analyze the convergence order of $$\inf\limits_{\varphi\in\mbf H^1_{x_0,x}(0,T;\mbb R^d)}\hat{S}_{T,h}(\varphi)=\inf\limits_{(\psi_1,\psi_2,\ldots,\psi_{N-1})\in\mbb R^{N-1}}S_{T,h}(\psi_1,\psi_2,\ldots,\psi_{N-1}).$$  For the simplicity of notations, we illustrate our idea in the case $d=1$. By the classical variational theory (see e.g., \cite{variation}),  any minimizer $\varphi^*$ of $S_T$  solves the Euler--Lagrange equation in the weak sense.  
If $b$ and $\sigma$ are sufficiently smooth, then the weak solution $\varphi^*$ is also sufficiently smooth and  solves the following boundary value problem
\begin{align}\label{EulerS}
	\begin{cases}
		z'(t)=-\sigma'(\varphi^*(t))\sigma(\varphi^*(t))z^2(t)-b'(\varphi^*(t))z(t),\quad t\in(0,T),\\
		\varphi^*(0)=x_0,~\varphi^*(T)=x,
	\end{cases}
\end{align}
where $z(t):=\sigma^{-1}(\varphi^*(t))((\varphi^*)'(t)-b(\varphi^*(t)))\sigma^{-1}(\varphi^*(t))$, $t\in[0,T]$.

Let $(\psi^*_1,\psi^*_2,\ldots,\psi^*_{N-1})$ be a minimizer of $S_{T,h}$. 
Then it holds that 	\\$\frac{\partial S_{T,h}}{\partial \psi_n}(\psi^*_1,\psi^*_2,\ldots,\psi^*_{N-1})=0$, $n=1,2,\ldots,N-1$. Accordingly, we have
\begin{align}\label{FDM}
	z_{n+1}=&\;z_n-h\sigma'(\psi^*_n)\sigma(\psi^*_n)z_{n+1}^2-\theta hb'((1-\theta)\psi^*_{n-1}+\theta \psi^*_n)z_n\nonumber\\
	&\;-(1-\theta)hb'((1-\theta)\psi^*_{n}+\theta \psi^*_{n+1})z_{n+1},~ n=1,2,\ldots,N-1,
\end{align} 
where $z_{n+1}:=\sigma^{-1}(\psi^*_n)\big(\frac{\psi^*_{n+1}-\psi^*_{n}}{h}-b((1-\theta)\psi^*_n+\theta\psi^*_{n+1})\big)\sigma^{-1}(\psi^*_n)$, $n=0,\ldots,N-1$, with $\psi_0=x_0$ and $\psi_N=x$

The error estimate between   $\inf\limits_{(\psi_1,\psi_2,\ldots,\psi_{N-1})\in\mbb R^{N-1}}S_{T,h}(\psi_1,\psi_2,\ldots,\psi_{N-1})$  and \\$\inf\limits_{\varphi\in\mbf H^1_{x_0,x}(0,T;\mbb R^d)}S_{T}(\varphi)$ now boils down to that between  
$S_{T,h}(\psi^*_1,\psi^*_2,\ldots,\psi^*_{N-1})$ and $S_{T}(\varphi^*)$. Further, it is observed that $(\psi^*_0,\psi^*_1,\ldots,\psi^*_N)$ actually is the numerical solution of the FDM for \eqref{EulerS}.  Thus, in order to obtain the     convergence order of minimum of $S_{T,h}$, one needs to give the error order of
\begin{align}\label{1order}
	\sup\limits_{n=1,\ldots,N-1}(\vert \psi^*_{n}-\varphi^*(t_n)\vert +\vert z_{n+1}-z(t_n)\vert ).
\end{align}
However, the theoretical analysis for the error order of \eqref{1order} is difficult due to the strong non-linearity of the second order differential equation in the boundary value problem \eqref{EulerS}. Our work in
this direction is still in progress.

\bibliographystyle{plain}
\bibliography{mybibfile}

\begin{thebibliography}{10}

\bibitem{LDPofSDE}
M.~Bou\'{e} and P.~Dupuis.
\newblock A variational representation for certain functionals of {B}rownian
  motion.
\newblock {\em Ann. Probab.}, 26(4):1641--1659, 1998.

\bibitem{variation}
G.~Buttazzo, M.~Giaquinta, and S.~Hildebrandt.
\newblock {\em One-dimensional {V}ariational {P}roblems}, volume~15 of {\em
  Oxford Lecture Series in Mathematics and its Applications}.
\newblock The Clarendon Press, Oxford University Press, New York, 1998.
\newblock An introduction.

\bibitem{ChenX}
X.~Chen.
\newblock {\em Random {W}alk {I}ntersections. {L}arge {D}eviations and
  {R}elated {T}opics}, volume 157 of {\em Mathematical Surveys and Monographs}.
\newblock American Mathematical Society, Providence, RI, 2010.

\bibitem{Dembo}
A.~Dembo and O.~Zeitouni.
\newblock {\em Large {D}eviations {T}echniques and {A}pplications}, volume~38
  of {\em Stochastic Modelling and Applied Probability}.
\newblock Springer-Verlag, Berlin, 2010.

\bibitem{Weinan2002}
W.~E, W.~Ren, and E.~Vanden-Eijnden.
\newblock Energy landscapes and rare events.
\newblock In {\em Proceedings of the {I}nternational {C}ongress of
  {M}athematicians, {V}ol. {I} ({B}eijing, 2002)}, pages 621--630. Higher Ed.
  Press, Beijing, 2002.

\bibitem{Weinan2002String}
W.~E, W.~Ren, and E.~Vanden-Eijnden.
\newblock String method for the study of rare events.
\newblock {\em Phys. Rev. B}, 66, 2002.

\bibitem{Weinan2004}
W.~E, W.~Ren, and E.~Vanden-Eijnden.
\newblock Minimum action method for the study of rare events.
\newblock {\em Comm. Pure Appl. Math.}, 57(5):637--656, 2004.

\bibitem{gMAM2008}
M.~Heymann and E.~Vanden-Eijnden.
\newblock The geometric minimum action method: a least action principle on the
  space of curves.
\newblock {\em Comm. Pure Appl. Math.}, 61(8):1052--1117, 2008.

\bibitem{MAM1998}
H.~J\`{o}nsson, G.~Mills, and K.~W. Jacobsen.
\newblock {\em Nudged {E}lastic {B}and {M}ethod for {F}inding {M}inimum
  {E}nergy {P}aths of {T}ransitions}.
\newblock in Classical and Quantum Dynamics in Condensed Phase Simulations.
  World Scientific, Singapore, 1998.

\bibitem{AA94}
A.~Quarteroni and A.~Valli.
\newblock {\em Numerical {A}pproximation of {P}artial {D}ifferential
  {E}quations}, volume~23 of {\em Springer Series in Computational
  Mathematics}.
\newblock Springer-Verlag, Berlin, 1994.

\bibitem{Kampen81}
N.~G. van Kampen.
\newblock {\em Stochastic {P}rocesses in {P}hysics and {C}hemistry}, volume 888
  of {\em Lecture Notes in Mathematics}.
\newblock North-Holland Publishing Co., Amsterdam-New York, 1981.

\bibitem{WangXL2015}
X.~Wan.
\newblock A minimum action method with optimal linear time scaling.
\newblock {\em Commun. Comput. Phys.}, 18(5):1352--1379, 2015.

\bibitem{ZhaiJ2018}
X.~Wan, H.~Yu, and J.~Zhai.
\newblock Convergence analysis of a finite element approximation of minimum
  action methods.
\newblock {\em SIAM J. Numer. Anal.}, 56(3):1597--1620, 2018.

\bibitem{WangX2018}
X.~Wan, B.~Zheng, and G.~Lin.
\newblock An {$hp$}-adaptive minimum action method based on a posteriori error
  estimate.
\newblock {\em Commun. Comput. Phys.}, 23(2):408--439, 2018.

\bibitem{ZhouX2010}
X.~Wan, X.~Zhou, and W.~E.
\newblock Study of the noise-induced transition and the exploration of the
  phase space for the {K}uramoto-{S}ivashinsky equation using the minimum
  action method.
\newblock {\em Nonlinearity}, 23(3):475--493, 2010.

\bibitem{YaoRen2015}
W.~Yao and W.~Ren.
\newblock Noise-induced transition in barotropic flow over topography and
  application to {K}uroshio.
\newblock {\em J. Comput. Phys.}, 300:352--364, 2015.

\bibitem{aMAM2008}
X.~Zhou, W.~Ren, and W.~E.
\newblock Adaptive minimum action method for the study of rare events.
\newblock {\em J. Chem. Phys.}, 128, 2008.

\end{thebibliography}

\end{document}